\newtheoremstyle{standard}
{16pt} 
{16pt} 
{} 
{} 
{\bfseries}
{} 
{ } 
{{\thmname{#1~}}{\thmnumber{#2.}}\thmnote{~(#3)}} 
\newtheoremstyle{kursiv}
{16pt} 
{16pt} 
{\itshape} 
{} 
{\bfseries}
{} 
{ } 
{{\thmname{#1~}}{\thmnumber{#2.}}\thmnote{~(#3)}} 
\theoremstyle{standard}
\newtheorem{definition}[subsection]{Definition}
\newtheorem{example}[subsection]{Example}
\newtheorem{remark}[subsection]{Remark}
\newtheorem{nota}[subsection]{Notation}
\newtheorem{problem}[subsection]{Problem}
\theoremstyle{kursiv}
\newtheorem{theorem}[subsection]{Theorem}
\newtheorem{proposition} [subsection]{Proposition}
\newtheorem{lemma} [subsection]{Lemma}
\newcommand{\N}{\mathbb{N}}
\newcommand{\R}{\mathbb{R}}
\newcommand{\K}{\mathbb{K}}
\renewcommand{\C}{\mathbb{C}}
\newcommand{\cB}{\ensuremath{\mathcal{B}}}
\newcommand{\cG}{\ensuremath{\mathcal{G}}}
\newcommand{\cH}{\ensuremath{\mathcal{H}}}
\newcommand{\cI}{\ensuremath{\mathcal{I}}}
\newcommand{\cJ}{\ensuremath{\mathcal{J}}}
\newcommand{\cL}{\ensuremath{\mathcal{L}}}
\newcommand{\cP}{\ensuremath{\mathcal{P}}}
\newcommand{\cT}{\ensuremath{\mathcal{T}}}
\newcommand{\cU}{\ensuremath{\mathcal{U}}}
\newcommand{\Hopf}  {\cH}					
\newcommand{\HIdeal}{\cJ}			                
\newcommand{\lcA}{A}						
\newcommand{\lcB}{B}						
\newcommand{\g}{\mathfrak{g}}
\DeclareMathOperator{\Hom}{Hom}
\DeclareMathOperator*{\Ann}{Ann}
\newcommand{\InfChar}[2]{ \g(#1 , #2) }				
\newcommand{\Char}[2]{     \mathcal{G}(#1 , #2) }			
\newcommand{\AnnGroup}{\Ann(\HIdeal, \lcB) \cap \Char{\Hopf}{\lcB}}
\newcommand{\AnnLieAlgebra}{\Ann(\HIdeal, \lcB) \cap \InfChar{\Hopf}{\lcB}}
\newcommand{\LB}[1][\cdot \hspace{1pt} , \cdot]{[\hspace{1pt} #1 \hspace{1pt} ]}
\newcommand{\Evol}{\mathrm{Evol}}
\newcommand{\evol}{\mathrm{evol}}
\newcommand{\ev}{\mathrm{ev}}
\newcommand{\Lf}{\ensuremath{\mathbf{L}}}
\DeclareMathOperator{\one}{\mathbf{1}}
\DeclareMathOperator*{\Mero}{\mathcal{K}}
\DeclareMathOperator*{\Holo}{\mathcal{O}}
\newcommand{\BHol}{\Hol_{\mathrm{b}}}
\newcommand{\Hol}{\mathrm{Hol}}
\newcommand{\norm}[1]{\left\| #1 \right\|}
\newcommand*\D{\mathop{}\!\mathrm{d}}
\newcommand{\RT}{\ensuremath{\cT}}
\DeclareMathOperator{\OST}{OST}
\newcommand{\BGp}{\ensuremath{G_{\mathrm{TM}}}}
\tikzstyle dtree=[grow'=up,sibling distance=2mm,level distance=2mm,thick]
\tikzstyle dtree node=[scale=0.3,shape=circle,very thin,draw]
\tikzstyle dtree black node=[style=dtree node,fill=black]
\newcommand{\onenode}{
  \begin{tikzpicture}[dtree]
    \node[dtree black node] {}
    ;
  \end{tikzpicture}
}
\newcommand{\twonode}{
\begin{tikzpicture}[dtree]
  \node[dtree black node] {}
  child { node[dtree black node] {} }
  ;
\end{tikzpicture}
}
\providecommand*{\shuffle}{%
  \mathbin{\mathpalette\shuffle@{}}%
}
\newcommand*{\shuffle@}[2]{%
  \sbox0{$#1\vcenter{}$}%
  \kern .15\ht0 
  \rlap{\vrule height .25\ht0 depth 0pt width 2.5\ht0}%
  \raise.1\ht0\hbox to 2.5\ht0{%
    \vrule height 1.75\ht0 depth -.1\ht0 width .17\ht0 %
    \hfill
    \vrule height 1.75\ht0 depth -.1\ht0 width .17\ht0 %
    \hfill
    \vrule height 1.75\ht0 depth -.1\ht0 width .17\ht0 %
  }%
  \kern .15\ht0 
}
\begin{document}

\title{Overview of (pro-)Lie group structures on Hopf algebra character groups}
\author{Geir Bogfjellmo\footnote{\quad Chalmers Technical University \& Gothenburg University, Chalmers Tv\"a rgata 3, Gothenburg, Sweden,\newline current affiliation: ICMAT Madrid, Spain \href{mailto:geir.bogfjellmo@icmat.es}{geir.bogfjellmo@icmat.es}}, Rafael Dahmen\footnote{\quad TU Darmstadt, Schlo\ss{}gartenstr.\ 7, Darmstadt, Germany, \href{mailto:dahmen@mathematik.tu-darmstadt.de}{dahmen@mathematik.tu-darmstadt.de}}, Alexander Schmeding\footnote{\quad NTNU, Alfred Getz vei 1, Trondheim, Norway,\newline current affiliation: TU Berlin, Germany \href{mailto:schmeding@tu-berlin.de}{schmeding@tu-berlin.de}}}

\maketitle

\abstract{
Character groups of Hopf algebras appear in a variety of mathematical and physical contexts.
To name just a few, they arise in non-commutative geometry, renormalisation of quantum field theory, and numerical analysis.
In the present article we review recent results on the structure of character groups of Hopf algebras as infinite-dimensional (pro-)Lie groups.
It turns out that under mild assumptions on the Hopf algebra or the target algebra the character groups possess strong structural properties.
Moreover, these properties are of interest in applications of these groups outside of Lie theory.
We emphasise this point in the context of two main examples:
  \begin{itemize}
   \item the Butcher group from numerical analysis and
   \item character groups which arise from the Connes--Kreimer theory of renormalisation of quantum field theories.
  \end{itemize}
}

\tableofcontents

\section*{Introduction}
\label{sec:1}
Character groups of Hopf algebras appear in a variety of mathematical and physical contexts.
 To name just a few, they arise in non-commutative geometry, renormalisation of quantum field theory (see \cite{MR2371808}) and numerical analysis (cf.\ \cite{Brouder-04-BIT}).
  In these contexts the arising groups are often studied via an associated Lie algebra or by assuming an auxiliary topological or differentiable structure on these groups.
 For example, in the context of the Connes--Kreimer theory of renormalisation of quantum field theories, the group of characters of a Hopf algebra of Feynman graphs is studied via its Lie algebra (cf.\ \cite{MR2371808}).
 Moreover, it turns out that this group is a projective limit of finite-dimensional Lie groups.
 The (infinite-dimensional) Lie algebra and the projective limit structure are important tools to analyse these character groups.

 Another example for a character group is the so called Butcher group from numerical analysis which can be realised as a character group of a Hopf algebra of trees.
 In \cite{BS14} we have shown that the Butcher group can be turned into an infinite-dimensional Lie group.
 All of these results can be interpreted in the general framework of (infinite-dimensional) Lie group structures on character groups of Hopf algebras developed in \cite{BDS15}.
 \medskip

 The present article provides an introduction to the theory developed in \cite{BDS15} with a view towards applications and further research.
 To be as accessible as possible, we consider the results from the perspective of examples arising in numerical analysis and mathematical physics.
 Note that the Lie group structures discussed here will in general be infinite-dimensional and their modelling spaces will be more general than Banach spaces.
 Thus we base our investigation on a concept of $C^r$-maps between locally convex spaces known as Bastiani calculus or Keller's $C^r_c$-theory.
 However, we recall and explain the necessary notions as we do not presuppose familiarity with the concepts of infinite-dimensional analysis and Lie theory.

 We will always suppose that the target algebra supports a suitable topological structure, i.e.\ it is a locally convex algebra (e.g.\ a Banach algebra).
  The infinite-dimensional structure of a character group is then determined by the algebraic structure of the source Hopf algebra and the topological structure of the target algebra.
 If the Hopf algebra is graded and connected, it turns out that the character group (with values in a locally convex algebra) can be made into an infinite-dimensional Lie group.
 Hopf algebras with these properties and their character groups appear often in combinatorial contexts\footnote{In these contexts the term ``combinatorial Hopf algebra'' is often used though there seems to be no broad consensus on the meaning of this term.} and are connected to certain operads and applications in numerical analysis.
 For example in \cite{MR3350091} Hopf algebras related to numerical integration methods and their connection to pre-Lie and post-Lie algebras are studied.

 If the Hopf algebra is not graded and connected, then the character group can in general not be turned into an infinite-dimensional Lie group.
 However, it turns out that the character group of an arbitrary Hopf algebra (with values in a finite-dimensional algebra) is always a topological group with strong structural properties, i.e.\ it is always the projective limit of finite-dimensional Lie groups.
 Groups with these properties -- so called pro-Lie groups -- are accessible to Lie theoretic methods (cf.\ \cite{MR2337107}) albeit they may not admit a differential structure.

 Finally, let us summarise the broad perspective taken in the present article by the slogan:
 Many constructions on character groups can be interpreted in the framework of infinite-dimensional (pro-)Lie group structures on these groups.

\section{Hopf algebras and characters}

In this section, we recall the language of Hopf algebras and their character groups.
The material here is covered by almost every introduction to Hopf algebras (see \cite{MR547117,MR2523455,MR2290769,MR1321145,MR1381692,MR0252485}).
Thus we restrict the exposition here to fix only some notation.

\begin{nota}
Denote by $\N$ the set of natural numbers (without $0$) and $\N_0 := \N \cup \{0\}$.
By $\K$ we denote either the field of real numbers $\R$ or of complex numbers $\C$.
\end{nota}

\begin{definition}[Hopf algebra]	
 A Hopf algebra $\Hopf=(\Hopf,m_\Hopf,u_\Hopf,\Delta_\Hopf,\epsilon_\Hopf,S_\Hopf)$ (over $\K$) is a bialgebra with compatible antipode $S \colon \Hopf \rightarrow \Hopf$,
 i.e.\ $(\Hopf, m_\Hopf, u_\Hopf)$ is a unital $\K$-algebra, $(\Hopf, \Delta_\Hopf, \epsilon_\Hopf)$ is a unital $\K$-coalgebra and the following holds:
  \begin{enumerate}
   \item the maps $\Delta_\Hopf \colon \Hopf \rightarrow \Hopf \otimes \Hopf$ and $\epsilon_\Hopf \colon \Hopf \rightarrow \K$ are algebra morphisms
   \item $S$ is $\K$-linear with $m_\Hopf \circ (\mbox{id} \otimes S) \circ  \Delta_\Hopf  = u_\Hopf \circ \epsilon_\Hopf = m_\Hopf \circ (S \otimes \mbox{id} ) \circ  \Delta_\Hopf$.
  \end{enumerate}

 A Hopf algebra $\Hopf$ is ($\N_0$-)\emph{graded}, if it admits a decomposition $\Hopf = \oplus_{n \in \N_0} \Hopf_n$ such that $m_\Hopf (\Hopf_n \otimes \Hopf_m) \subseteq \Hopf_{n+m}$ and $\Delta_\Hopf (\Hopf_n) \subseteq \bigoplus_{k+l=n} \Hopf_l \otimes \Hopf_k$ hold for all $n,m \in \N_0$.
 If in addition $\Hopf_0 \cong \K$ is satisfied $\Hopf$ is called \emph{graded and connected} Hopf algebra.
\end{definition}

In the Connes--Kreimer theory of renormalisation of quantum field theory one considers the Hopf algebra $\Hopf_{FG}$ of Feynman graphs.\footnote{This Hopf algebra depends on the quantum field theory under consideration, but we will suppress this dependence in our notation.} As it is quite involved to define this Hopf algebra we refer to \cite[1.6]{MR2371808} for details. Below we describe in Example \ref{ex: RT} a related but simpler Hopf algebra.
This Hopf algebra of rooted trees arises naturally in numerical analysis, renormalisation of quantum field theories and non-commutative geometry (see \cite{Brouder-04-BIT} for a survey).
We recall the definition of the Hopf algebra in broad strokes as it is somewhat prototypical for Hopf algebras from numerical analysis.
To construct the Hopf algebra, recall some notation first.

 \begin{nota}
 \begin{enumerate}
  \item A \emph{rooted tree} is a connected \emph{finite} graph without cycles with a distinguished node called the \emph{root}.
  We identify rooted trees if they are graph isomorphic via a root preserving isomorphism.

  Let $\RT$ be \emph{the set of all rooted trees} and write $\RT_0 := \RT \cup \{\emptyset\}$ where $\emptyset$ denotes the empty tree.
  The \emph{order} $|\tau|$ of a tree $\tau \in \RT_0$ is its number of vertices.
  \item An \emph{ordered subtree}\footnote{The term ``ordered'' refers to that the subtree remembers from which part of the tree it was cut.} of $\tau \in \RT_0$ is a subset $s$ of all vertices of $\tau$ which satisfies
    \begin{itemize}
     \item[(i)] \ $s$ is connected by edges of the tree $\tau$,
     \item[(ii)] \ if $s$ is non-empty, then it contains the root of $\tau$.
    \end{itemize}
   The set of all ordered subtrees of $\tau$ is denoted by $\OST (\tau)$.
   Further, $s_\tau$ denotes the tree given by vertices of $s$ with root and edges induced by $\tau$.
  \item A \emph{partition} $p$ of a tree $\tau \in \RT_0$ is a subset of edges of the tree.
 We denote by $\cP (\tau)$ the set of all partitions of $\tau$ (including the empty partition).
 \end{enumerate}
  Associated to $s \in \OST (\tau)$ is a forest $\tau \setminus s$ (collection of rooted trees) obtained from $\tau$ by removing the subtree $s$ and its adjacent edges.
  Similarly, to a partition $p \in \cP (\tau)$ a forest $\tau \setminus p$ is associated as the forest that remains when the edges of $p$ are removed from the tree $\tau$.
  In either case, we let $\# \tau \setminus p$ be the number of trees in the forest.
\end{nota}

 \begin{example}[The Connes--Kreimer Hopf algebra of rooted trees \cite{CK98}]\label{ex: RT}
  Consider the algebra $\Hopf^{\K}_{CK} := \K [\cT]$ of polynomials which is generated by the trees in $\RT$.
  We denote the structure maps of this algebra by $m$ (multiplication) and $u$ (unit).
  Indeed $\Hopf^\K_{CK}$ becomes a bialgebra whose coproduct we define on the trees (which generate the algebra) via
    \begin{displaymath}
     \Delta \colon \Hopf^\K_{CK} \rightarrow \Hopf^\K_{CK} \otimes \Hopf^\K_{CK} ,\quad  \tau \mapsto \sum_{s \in \OST (\tau)} (\tau \setminus s) \otimes s_\tau .
    \end{displaymath}
 Then the counit $\epsilon \colon \Hopf^\K_{CK} \rightarrow \K$ is given by $\epsilon (1_{\Hopf^\K_{CK}}) = 1$ and $\epsilon (\tau) = 0$ for all $\tau \in \RT$.
 Finally, the antipode is defined on the trees (which generate $\Hopf^\K_{CK}$) via
  \begin{displaymath}
   S \colon \Hopf^\K_{CK} \rightarrow \Hopf_{CK}^\K ,\quad \tau \mapsto \sum_{p \in \cP (\tau)} (-1)^{\# \tau \setminus p} (\tau \setminus p)
  \end{displaymath}
 and one can show that $\Hopf^\K_{CK} = (\Hopf^\K_{CK} ,m,u, \Delta,\epsilon,S)$ is a $\K$-Hopf algebra (see \cite[5.1]{CHV2010} for more details and references).

 Furthermore, $\Hopf^\K_{CK}$ is a graded and connected Hopf algebra with respect to the \emph{number of nodes grading}:
 For each $n \in \N_0$ we define the $n$th degree via
  \begin{displaymath}
  \mbox{for } \tau_i \in \RT, 1 \leq i \leq k, k \in \N_0 \quad \quad \tau_1 \cdot \tau_2 \cdot \ldots \cdot \tau_k  \in (\Hopf^\K_{CK})_n \mbox{ if and only if } \sum_{r= 1}^k |\tau_k| = n
  \end{displaymath}
 \end{example}

\begin{remark}
 Recently modifications of the Hopf algebra of rooted trees have been studied in the context of numerical analysis.
 In particular, non-commutative analogues to the Hopf algebra $\Hopf_{CK}^\K$ (where the algebra is constructed as the non-commutative polynomial algebra of planar trees) have been studied in the context of Lie--Butcher theory.
 Their groups of characters, which we define in a moment, are of particular interest to develop techniques for numerical integration on manifolds (see \cite{MR2407032}).
\end{remark}

 We briefly mention another example of a ``combinatorial'' Hopf algebra:

 \begin{example}[{The shuffle Hopf algebra \cite{MR1231799}}]\label{ex: shuffle}
  Fix a non-empty set $A$ called the \emph{alphabet}.
  A \emph{word} in the alphabet $A$ is a finite (possibly empty) sequence of elements in $A$.
  We denote by $A^*$ the set of all words in $A$ and consider the vector space $\C \langle A\rangle$ freely generated by the elements in $A^*$.
  Let $w = a_1 \ldots a_n$ and $w'  = a_{n+1} \ldots a_{n+m}$ be words of length $n$ and $m$, respectively.
  Define the \emph{shuffle product} by
    \begin{displaymath}
     w \shuffle w' := \sum_{\{i_1 < i_2 < \cdots < i_n\}, \{j_1 < j_2 < \cdots < j_m\}} a_{i_1} \cdots a_{i_n} a_{j_1} \cdots a_{j_m} ,
    \end{displaymath}
  where the summation runs through all disjoint sets which satisfy
    \begin{displaymath}
     \{i_1 < i_2 < \cdots < i_n\} \sqcup \{j_1 < j_2 < \cdots < j_m\} = \{1,\ldots, m+n\}.
    \end{displaymath}
  The (bilinear extensions of the) shuffle product and the deconcatenation of words turns $\C \langle A\rangle$ into a complex bialgebra.
  Together with the antipode $S (w) = (-1)^n w$ (for a word $w= a_n \ldots a_1$ of length $n$) and the grading by word length, we obtain a graded and connected Hopf algebra $\text{Sh} (A)$.
  We call this Hopf algebra the \emph{shuffle Hopf algebra}.

  Recall that the shuffle Hopf algebra appears in diverse applications connected to numerical analysis, see e.g.\ \cite{MR2407032,1502.05528v2,MR2790315}.
 \end{example}

 We will now consider groups of characters of Hopf algebras:
 \begin{definition}							\label{def: character}
  Let $\Hopf$ be a Hopf algebra and $\lcB$ a commutative algebra.
  A linear map $\phi \colon \Hopf \rightarrow \lcB$ is called
  	\begin{enumerate}
  	\item ($\lcB$-valued) \emph{character} if it is a homomorphism of unital algebras, i.e.
  \begin{equation}\label{eq char:char}
   \phi(a b) = \phi(a)\phi(b)\mbox{ for all } a,b \in \Hopf \mbox{ and } \phi(1_\Hopf)=1_\lcB.
  \end{equation}
  The set of characters is denoted by $\Char{\Hopf}{\lcB}$.
  \item \emph{infinitesimal character} if
  \begin{equation}\label{eq: InfChar:char}
  \phi(a b)=\phi(a) \epsilon_\Hopf(b) + \epsilon_\Hopf(a) \phi(b) \mbox{ for all }  a,b \in \Hopf
  \end{equation}
  We denote by  $\InfChar{\Hopf}{\lcB}$ the set of all infinitesimal characters.
  \end{enumerate}
 \end{definition}

 \begin{lemma}[{\cite[4.3 Proposition 21 and 22]{MR2523455}}]
 Let $\Hopf$ be a Hopf algebra and $\lcB$ a commutative algebra with multiplication map $\mu_\lcB \colon \lcB \otimes \lcB \rightarrow \lcB$.
 Then we obtain the algebra of $\K$-linear maps $\Hom_\K(\Hopf,\lcB)$ with the \emph{convolution product}
 \[
  \phi \star \psi := \mu_{\lcB} \circ (\phi \otimes \psi) \circ \Delta_\Hopf
 \]
 and the following holds:
 \begin{enumerate}
 \item  $\Char{\Hopf}{\lcB}$ is a subgroup of the group of units $(\Hom_{\K} (\Hopf,\lcB)^\times,\star)$.
  Inversion in  $\Char{\Hopf}{\lcB}$ is given by $\phi \mapsto \phi \circ S_\Hopf$ and $1_\lcA:= u_\lcB\circ\epsilon_\Hopf \colon \Hopf \rightarrow \lcB , x \mapsto \epsilon_\Hopf (x) 1_\lcB$ is the unit element.
  \item  $\InfChar{\Hopf}{\lcB}$ is a Lie subalgebra of $(\Hom_\K (\Hopf,\lcB),[\cdot,\cdot])$, where $\LB{}$ is the commutator bracket of $(A,\star)$.
 \end{enumerate}
 \end{lemma}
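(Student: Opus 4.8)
The plan is to realise both $\Char{\Hopf}{\lcB}$ and $\InfChar{\Hopf}{\lcB}$ inside the associative unital $\K$-algebra $(\Hom_\K(\Hopf,\lcB),\star)$, whose unit is $\one:=u_\lcB\circ\epsilon_\Hopf$; that $\star$ is associative with this unit follows at once from coassociativity and counitality of $\Delta_\Hopf$, and I would either cite this or verify it in a line. For (1) it then suffices, by the subgroup criterion, to show that $\Char{\Hopf}{\lcB}$ is nonempty, closed under $\star$, and closed under $\phi\mapsto\phi\circ S_\Hopf$, and that this last map produces the $\star$-inverse. For (2) it suffices to show that $\InfChar{\Hopf}{\lcB}$ is a $\K$-linear subspace of $\Hom_\K(\Hopf,\lcB)$ closed under the commutator bracket $[\phi,\psi]=\phi\star\psi-\psi\star\phi$; antisymmetry and the Jacobi identity are then inherited for free from the associative structure.

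For (1), first $\one$ is a character because $\epsilon_\Hopf$ is an algebra morphism, so $\one(ab)=\epsilon_\Hopf(a)\epsilon_\Hopf(b)1_\lcB=\one(a)\one(b)$ and $\one(1_\Hopf)=1_\lcB$. Next, for $\phi,\psi\in\Char{\Hopf}{\lcB}$ I would write $\Delta_\Hopf a=\sum a_{(1)}\otimes a_{(2)}$ in Sweedler notation and use that $\Delta_\Hopf$ is an algebra morphism to obtain $(\phi\star\psi)(ab)=\sum\phi(a_{(1)}b_{(1)})\psi(a_{(2)}b_{(2)})=\sum\phi(a_{(1)})\phi(b_{(1)})\psi(a_{(2)})\psi(b_{(2)})$; here commutativity of $\lcB$ is exactly what lets me reorder the factors into $\bigl(\sum\phi(a_{(1)})\psi(a_{(2)})\bigr)\bigl(\sum\phi(b_{(1)})\psi(b_{(2)})\bigr)=(\phi\star\psi)(a)(\phi\star\psi)(b)$, while $(\phi\star\psi)(1_\Hopf)=1_\lcB$ follows from $\Delta_\Hopf 1_\Hopf=1_\Hopf\otimes 1_\Hopf$. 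Then $\phi\circ S_\Hopf$ is again a character: since the antipode of any Hopf algebra is an algebra anti-homomorphism, $(\phi\circ S_\Hopf)(ab)=\phi(S_\Hopf b\, S_\Hopf a)=\phi(S_\Hopf b)\phi(S_\Hopf a)$, which equals $(\phi\circ S_\Hopf)(a)(\phi\circ S_\Hopf)(b)$ by commutativity of $\lcB$, and the unit is preserved since $S_\Hopf 1_\Hopf=1_\Hopf$. Finally the antipode axiom $m_\Hopf\circ(\mathrm{id}\otimes S_\Hopf)\circ\Delta_\Hopf=u_\Hopf\circ\epsilon_\Hopf$ together with $\mu_\lcB\circ(\phi\otimes\phi)=\phi\circ m_\Hopf$ gives $\phi\star(\phi\circ S_\Hopf)=\phi\circ u_\Hopf\circ\epsilon_\Hopf=\one$, and symmetrically $(\phi\circ S_\Hopf)\star\phi=\one$; hence each character is a unit with inverse $\phi\circ S_\Hopf$, and $\phi\star\psi^{-1}=\phi\star(\psi\circ S_\Hopf)$ is again a character, so $\Char{\Hopf}{\lcB}$ is a subgroup of $\Hom_\K(\Hopf,\lcB)^\times$.

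For (2), the defining identity $\phi(ab)=\phi(a)\epsilon_\Hopf(b)+\epsilon_\Hopf(a)\phi(b)$ is $\K$-linear in $\phi$, so $\InfChar{\Hopf}{\lcB}$ is a linear subspace; only closure under the bracket requires work. I would again expand $(\phi\star\psi)(ab)$ using that $\Delta_\Hopf$ is an algebra morphism and substitute the Leibniz-type identity for $\phi$ and for $\psi$ into each tensor leg. The four resulting terms simplify with the counit identities $\sum\epsilon_\Hopf(a_{(1)})a_{(2)}=a=\sum a_{(1)}\epsilon_\Hopf(a_{(2)})$ (whence $\sum\epsilon_\Hopf(b_{(1)})\epsilon_\Hopf(b_{(2)})=\epsilon_\Hopf(b)$): two of them collapse to $(\phi\star\psi)(a)\epsilon_\Hopf(b)+\epsilon_\Hopf(a)(\phi\star\psi)(b)$, and the other two to the bilinear cross terms, which commutativity of $\lcB$ writes in the symmetric form $\phi(a)\psi(b)+\psi(a)\phi(b)$. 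Running the same expansion for $\psi\star\phi$ and subtracting, the derivation-type parts subtract to the correct shape while the symmetric cross terms cancel, leaving $[\phi,\psi](ab)=[\phi,\psi](a)\epsilon_\Hopf(b)+\epsilon_\Hopf(a)[\phi,\psi](b)$. Thus $[\phi,\psi]\in\InfChar{\Hopf}{\lcB}$, and since $[\cdot,\cdot]$ is the restriction of the commutator bracket of $(\Hom_\K(\Hopf,\lcB),\star)$, the subspace is automatically a Lie subalgebra.

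There is no deep obstacle here; the statement is essentially bookkeeping, but two points deserve care. First, commutativity of $\lcB$ is genuinely needed in both parts: in (1) it is what makes $\Char{\Hopf}{\lcB}$ closed under $\star$ and under $\phi\mapsto\phi\circ S_\Hopf$ (for a noncommutative target the characters form only a monoid), and in (2) it is what forces the bilinear cross terms to cancel. Second, one should resist expecting $\phi\star\psi$ itself to be an infinitesimal character: it is not, since it picks up the extra symmetric cross terms, and only the antisymmetrisation, i.e.\ the commutator, returns to $\InfChar{\Hopf}{\lcB}$.
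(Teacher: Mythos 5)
The paper does not prove this lemma at all; it is quoted from the literature (Manchon's notes, 4.3 Propositions 21 and 22), so there is no in-paper argument to compare against. Your proof is correct and is exactly the standard argument given there: realise everything in the convolution algebra, use that $\Delta_\Hopf$ and $\epsilon_\Hopf$ are algebra morphisms plus commutativity of $\lcB$ for closure of $\Char{\Hopf}{\lcB}$ under $\star$, the antipode axiom for the inverse, and the counit identities for the bracket computation; you also correctly isolate the two places where commutativity of $\lcB$ is genuinely used. The only ingredient you invoke that is not in the paper's stated axioms is that $S_\Hopf$ is an algebra anti-homomorphism, but this is a standard consequence of the Hopf algebra axioms and citing it is entirely appropriate.
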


 \begin{example}[Character groups of Hopf algebras]
   \begin{enumerate}
   \item The universal enveloping algebra $\cU (\g)$ of a Lie algebra $\g$ over $\K$ is a Hopf algebra (see \cite[3.6]{MR2290769})\footnote{Note that $\g$ can be recovered from the Hopf algebra $\cU (\g)$ (see \cite[Theorem 3.6.1]{MR2290769}) and the Hopf algebras which arise in this way are characterised by the Milnor--Moore theorem (cf.\ Remark \ref{rem: MMthm}). }.
   Every character $\phi\in\Char{\cU(\g)}{\K}$ corresponds to a Lie algebra homomorphism $\phi|_\g \colon \g \rightarrow \K$ which in turn factors naturally through a linear map\\ $\phi^\sim \colon \g/([\g,\g]) \rightarrow \K$, yielding a group isomorphism
   \[
    \Phi \colon \Char{\cU(\g)}{\K} \rightarrow \bigl((\g/[\g,\g])^*,+\bigr),\quad \phi \mapsto \bigl( \phi^\sim\colon v+[\g,\g] \mapsto \phi(v) \bigr).
   \]
   Thus we can identify the character group with the additive group of the dual vector space of the abelianisation of $\g$.
   (The group  $\Char{\cU(\g)}{\K}\cong \g/[\g,\g]$ is also naturally isomorphic to the first cohomology group $H^1(\g,\K)$ of the Lie algebra $\g$ with values in the trivial module $\K$, cf.\ \cite[p.168]{MR938524}.)
   \item Character groups of the Hopf algebra of Feynman graphs $\Hopf_{FG}$ play an important role in renormalisation of quantum field theories.
   Namely in the mathematical formulation of the renormalisation procedure one considers the groups $\Char{\Hopf_{FG}}{\C}$ and $\Char{\Hopf_{FG}}{\Mero}$, where $\Mero$ is the algebra of germs of meromorphic functions.
   We will return to these groups in Examples \ref{ex: mero} and \ref{ex: toe}.
   \item The character group $\Char{\Hopf_{CK}^\R}{\R}$ of the real Hopf algebra of rooted trees $\Hopf_{CK}^\R$ from Example \ref{ex: RT} turns out to be the Butcher group $\BGp$ from numerical analysis.
   This group is connected to numerical integration theory (see \cite{Brouder-04-BIT}).
   \item The character group of the shuffle algebra $\text{Sh} (A)$ (Example \ref{ex: shuffle}) appears for example in \cite{1502.05528v2}.
   There, the character group has been studied in the context of dynamical systems and their discretisation.
   We will provide more details below in Example \ref{ex: shuffle:char}.
   Moreover, in \cite{MR2790315} these character groups are considered in the context of Lie-Butcher theory.
  \end{enumerate}\label{ex: charactergroups}
 \end{example}

 Note that the concepts recalled in this section were purely algebraic and combinatorial in nature.
 In particular, we have neither referred to a topology nor to a differentiable structure on these groups.
 We will introduce the necessary tools (differential calculus on locally convex spaces) to endow the character groups of Hopf algebras with an infinite-dimensional Lie group structure in the next section.

\section{A primer on infinite-dimensional differential calculus and Lie groups}

 In this section basic facts on the differential calculus in infinite-dimensional spaces are recalled.
 The general setting for our calculus are locally convex spaces (see \cite{MR0342978, jarchow1980}).

 \begin{definition}
  Let $E$ be a vector space over $\K \in \{\R,\C \}$ together with a topology $T$.
  \begin{enumerate}
   \item $(E,T)$ is called \emph{topological vector space}, if the vector space operations are continuous with respect to $T$ and the metric topology on $\K$.
   \item A \emph{seminorm} on $E$ is a map $p \colon E \rightarrow [0,\infty[$ which satisfies for $x,y \in E, \lambda \in \K$ the identities $p(x+y) \leq p(x)+p(y)$ and $p(\lambda x)  =|\lambda| p(x)$, but may have $p^{-1} (0) \neq \{0\}$.
   We denote by $\text{pr}_p \colon E \rightarrow E_p := E/p^{-1} (0)$ the canonical projection to the normed space associated to $p$.
   \item A topological vector space $(E,T)$ is called \emph{locally convex space} if there is a family $\{p_i \mid i \in I\}$ of continuous seminorms for some index set $I$, such that
      \begin{enumerate}
      \item[i.] the topology $T$ is the initial topolgy with respect to $\{\text{pr}_{p_i} \colon E \rightarrow E_{p_i} \mid i \in I\}$, i.e.\ the $E$-valued (]map $f$ is continuous if and only if $\text{pr}_i \circ f$ is continuous for each $i\in I$,
      \item[ii.] if $x \in E$ with $p_i (x) = 0$ for all $i \in I$, then $x = 0$ (i.e.\ $T$ is Hausdorff).
     \end{enumerate}
  In this case, the topology $T$ is \emph{generated by the family of seminorms} $\{p_i\}_{i \in I}$.
  Usually we suppress $T$ and write $(E, \{p_i\}_{i\in I})$ or simply $E$ instead of $(E,T)$.
  \end{enumerate}
 \end{definition}

 Many familiar results from finite-dimensional calculus carry over to infinite dimensions if we assume that all spaces are locally convex.
 Hence we will only consider topological vector spaces which are locally convex.
 Note that the term \textquotedblleft locally convex\textquotedblright\ comes from the fact that the semi-norm balls
 form convex neighbourhoods of the points.

 \begin{example}
  \begin{enumerate}
   \item Normed spaces are locally convex spaces (see \cite[Ch.\ I 6.2]{MR0342978}).
   \item Let $(E, \{p_i\}_{i \in I})$ be a locally convex vector space and $X$ a set.
   Consider the space $E^X$ of all mappings from $X$ to $E$.
   For $x \in X$ we define the point-evaluation $\ev_x \colon E^X \rightarrow E, f \mapsto f (x)$.
  The \emph{topology of pointwise convergence} on $E^X$ is the locally convex topology generated by the seminorms
   	\begin{displaymath}
   	 q_{i,x} := p_i \circ \ev_x \mbox{ where } (i,x) \mbox{ runs through } I \times X.
   	 \end{displaymath}
   With the pointwise vector space operations and the topology of pointwise convergence $E^X$ becomes a locally convex vector space.
   By definition, this topology is completely determined by the topology of the target vector space.

   If $X$ is a linear space, we consider the subspace $\Hom_\K (X,E)$ of all linear  maps in $E^X$ with the induced topology (also called topology of pointwise convergence).
   \end{enumerate}\label{ex: lcvx:spac}
 \end{example}

  As we are working beyond the realm of Banach spaces, the usual notion of Fr\'{e}chet-differentiability cannot be used.\footnote{The problem here is that the bounded linear operators do not admit a good topological structure if the spaces are not normable. In  particular, the chain rule will not hold for Fr\'{e}chet-differentiability in general for these spaces (cf.\ \cite[p. 73]{MR583436} or \cite{keller1974}).}
  Moreover, there are several inequivalent notions of differentiability on locally convex spaces (see \cite{keller1974}).
  For more information on our setting of differential calculus we refer the reader to \cite{MR1911979,keller1974}.
  The notion of differentiability we adopt is natural and quite simple, as the derivative is defined via directional derivatives.

\begin{definition}\label{defn: deriv}
 Let $\K \in \{\R,\C\}$, $r \in \N \cup \{\infty\}$ and $E$, $F$ locally convex $\K$-vector spaces and $U \subseteq E$ open.
 Moreover we let $f \colon U \rightarrow F$ be a map.
 If it exists, we define for $(x,h) \in U \times E$ the directional derivative
 $$df(x,h) := D_h f(x) := \lim_{\K^\times \ni t\rightarrow 0} t^{-1} (f(x+th) -f(x)).$$
 We say that $f$ is $C^r_\K$ if the iterated directional derivatives
    \begin{displaymath}
     d^{(k)}f (x,y_1,\ldots , y_k) := (D_{y_k} D_{y_{k-1}} \cdots D_{y_1} f) (x)
    \end{displaymath}
 exist for all $k \in \N_0$ such that $k \leq r$, $x \in U$ and $y_1,\ldots , y_k \in E$ and define continuous maps $d^{(k)} f \colon U \times E^k \rightarrow F$.
 If it is clear which $\K$ is meant, we simply write $C^r$ for $C^r_\K$.
 If $f$ is $C^\infty_\C$, we say that $f$ is \emph{holomorphic} and if $f$ is $C^\infty_\R$ we say that $f$ is \emph{smooth}.
\end{definition}

On Fr\'{e}chet spaces (i.e.\ complete metrisable locally convex spaces) our notion of differentiability coincides with that from the \textquotedblleft convenient setting\textquotedblright\ of global analysis outlined in \cite{MR1471480}.
Note that differentiable maps in our setting are continuous by default (which is in general not true in the convenient setting).
Later on we will need analytic mappings between infinite-dimensional spaces. Note first:

\begin{remark}
 A map $f \colon U \rightarrow F$ is of class $C^\infty_\C$ if and only if it is \emph{complex analytic} i.e.,
  if $f$ is continuous and locally given by a series of continuous homogeneous polynomials (cf.\ \cite[Proposition 7.4 and 7.7]{MR2069671}).
  We then also say that $f$ is of class $C^\omega_\C$.
\label{rem: analytic}
\end{remark}

To introduce real analyticity, we have to generalise a suitable characterisation from the finite-dimensional case:
A map $\R \rightarrow \R$ is real analytic if it extends to a complex analytic map $\C \supseteq U \rightarrow \C$ on an open $\R$-neighbourhood $U$ in $\C$.
We proceed analogously for locally convex spaces by replacing $\C$ with a suitable complexification.

\begin{definition}[Complexification of a locally convex space]
 Let $E$ be a real locally convex topological vector space.
 Endow $E_\C := E \times E$ with the following operation
 \begin{displaymath}
  (x+iy).(u,v) := (xu-yv, xv+yu) \quad \mbox{ for } x,y \in \R, u,v \in E.
 \end{displaymath}
 The complex vector space $E_\C$ with the product topology is called the \emph{complexification} of $E$. We identify $E$ with the closed real subspace $E\times \{0\}$ of $E_\C$.
\end{definition}

\begin{definition}
 Let $E$, $F$ be real locally convex spaces and $f \colon U \rightarrow F$ defined on an open subset $U$.
 Following \cite{MR830252} and \cite{MR1911979}, we call $f$ \emph{real analytic} (or $C^\omega_\R$) if $f$ extends to a $C^\infty_\C$-map $\tilde{f}\colon \tilde{U} \rightarrow F_\C$ on an open neighbourhood $\tilde{U}$ of $U$ in the complexification $E_\C$.\footnote{If $E$ and $F$ are Fr\'{e}chet spaces, real analytic maps in the sense just defined coincide with maps which are continuous and can be locally expanded into a power series. See \cite[Proposition 4.1]{MR2402519}.}
\end{definition}

Note that many of the usual results of differential calculus carry over to our setting.
In particular, maps on connected domains whose derivative vanishes are constant as a version of the fundamental theorem of calculus holds.
Moreover, the chain rule holds in the following form:

\begin{lemma}[{Chain Rule {\cite[Propositions 1.12, 1.15, 2.7 and 2.9]{MR1911979}}}]
 Fix $k \in \N_0 \cup \{\infty , \omega\}$ and $\K \in \{\R , \C\}$ together with $C^k_\K$-maps $f \colon E \supseteq U \rightarrow F$ and $g \colon H \supseteq V \rightarrow E$ defined on open subsets of locally convex spaces.
 Assume that $g (V) \subseteq U$. Then $f\circ g$ is of class $C^{k}_\K$ and the first derivative of $f\circ g$ is given by
  \begin{displaymath}
   d(f\circ g) (x;v) = df(g(x);dg(x,v)) \quad  \mbox{for all } x \in U ,\ v \in H.
  \end{displaymath}
\end{lemma}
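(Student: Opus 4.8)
The plan is to reduce everything to the case $k=1$ and then bootstrap by induction on the order of differentiability; the case $k=0$ is trivial, a composite of continuous maps being continuous. For the $C^1$ core I would first introduce the basic tool of the theory: for a $C^1_\K$-map $h\colon W\to F$ on an open subset of a locally convex space, the \emph{extended difference quotient} $h^{[1]}$, defined on $W^{[1]}:=\{(w,y,t): w\in W,\ t\in\K,\ w+ty\in W\}$ by $h^{[1]}(w,y,t)=t^{-1}(h(w+ty)-h(w))$ for $t\neq 0$ and $h^{[1]}(w,y,0)=dh(w,y)$. The key lemma --- and the step I expect to be the main obstacle --- is that $h$ is $C^1_\K$ if and only if $h^{[1]}$ is (well-defined and) continuous on $W^{[1]}$. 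Continuity away from $t=0$ is immediate; the delicate point is continuity at $t=0$, and here I would use a mean value estimate rather than an $F$-valued integral (which need not exist without completeness assumptions on $F$): applying the ordinary fundamental theorem of calculus to the scalar functions $s\mapsto\lambda(h(w+sty))$ for continuous linear functionals $\lambda$ and invoking Hahn--Banach, one sees that $h^{[1]}(w,y,t)$ lies in the closed convex hull of $\{dh(w+sty,y): s\in[0,1]\}$; since $dh$ is continuous and $F$ is locally convex, this hull is trapped inside any prescribed convex neighbourhood of $dh(w_0,y_0)$ once $(w,y,t)$ is close to $(w_0,y_0,0)$, which yields the continuity of $h^{[1]}$ at $t=0$. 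This is precisely the argument that replaces the Banach-space reasoning one would otherwise invoke.

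With this lemma in hand the $C^1$ chain rule is a short computation. From $g(x+tv)-g(x)=t\,g^{[1]}(x,v,t)$ one obtains, for $x\in V$, $v\in H$ and small $t\neq 0$,
\[
 t^{-1}\bigl(f(g(x+tv))-f(g(x))\bigr)=f^{[1]}\bigl(g(x),\,g^{[1]}(x,v,t),\,t\bigr);
\]
the right-hand side is continuous in $(x,v,t)$, including at $t=0$, because $f^{[1]}$ and $g^{[1]}$ are continuous. Evaluating at $t=0$ gives $d(f\circ g)(x,v)=f^{[1]}(g(x),dg(x,v),0)=df(g(x),dg(x,v))$, which is the asserted formula, and the same identity exhibits $(f\circ g)^{[1]}$ as a composition of continuous maps, so $f\circ g\in C^1_\K$.

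For finite $k\geq 2$ I would proceed by induction, assuming the full statement for $k-1$. Given $C^k_\K$-maps $f$ and $g$, the $C^1$ step already gives $d(f\circ g)=df\circ\Phi$ with $\Phi\colon V\times H\to U\times E$, $(x,v)\mapsto(g(x),dg(x,v))$. Here $df$ is $C^{k-1}_\K$ and $\Phi$ is $C^{k-1}_\K$, since its two components $g$ and $dg$ are; hence by the inductive hypothesis $d(f\circ g)$ is $C^{k-1}_\K$, i.e.\ $f\circ g$ is $C^k_\K$. The case $k=\infty$ follows immediately, and the first-derivative formula holds for every $k\geq 1$ because it was obtained at the $C^1$ stage.

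It remains to treat $k=\omega$. For $\K=\C$ there is nothing to add, since $C^\omega_\C=C^\infty_\C$ by Remark~\ref{rem: analytic}. For $\K=\R$ I would pass to complexifications: choose $C^\infty_\C$-extensions $\tilde f\colon\tilde U\to F_\C$ of $f$ and $\tilde g\colon\tilde V\to E_\C$ of $g$ to open neighbourhoods of $U$ and $V$, and set $W:=\tilde g^{-1}(\tilde U)\cap\tilde V$. This $W$ is open in $H_\C$ and contains $V$, because $\tilde g$ agrees with $g$ on $V$ and $g(V)\subseteq U\subseteq\tilde U$; moreover $\tilde g(W)\subseteq\tilde U$ by construction, so $\tilde f\circ\tilde g|_W$ is defined and $C^\infty_\C$ by the complex case already established. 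Since it restricts to $f\circ g$ on $V$, this exhibits $f\circ g$ as real analytic, completing the proof.
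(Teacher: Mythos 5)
The paper does not prove this lemma; it is quoted verbatim from the cited reference (G{\l}\"ockner's \emph{Infinite-dimensional Lie groups without completeness restrictions}), so there is no in-paper argument to compare against. Your proof is essentially the one given in that source: the reduction to $k=1$ via the extended difference quotient $h^{[1]}$, the mean-value/Hahn--Banach argument (valid precisely because $F$ is locally convex, so separation applies and the closed convex hull of $\{dh(w+sty,y):s\in[0,1]\}$ is trapped in a prescribed convex neighbourhood) to get continuity of $h^{[1]}$ at $t=0$ without integrating in $F$, the identity $(f\circ g)^{[1]}(x,v,t)=f^{[1]}(g(x),g^{[1]}(x,v,t),t)$, the bootstrap $d(f\circ g)=df\circ\Phi$, and complexification for $C^\omega_\R$. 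This is correct. Two points are used silently and deserve at least a mention: (i) the equivalence of the Bastiani definition of $C^1$ (existence and continuity of $df$ on $U\times E$) with continuity of $h^{[1]}$ is needed in \emph{both} directions (for $f$ and $g$ going in, and for $f\circ g$ coming out), and you only sketch the hard direction; (ii) the induction step uses that $f\in C^k$ implies $df\in C^{k-1}$ and, conversely, that a $C^1$-map whose differential is $C^{k-1}$ is $C^k$ --- with the paper's definition via iterated directional derivatives this is routine bookkeeping (e.g.\ $d^{(2)}h(x;y_1,y_2)=d(dh)\bigl((x,y_1);(y_2,0)\bigr)$) but it is a separate lemma, which is why the citation lists four propositions rather than one.
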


The differential calculus developed so far extends easily to maps which are defined on non-open sets.
This situation occurs frequently in the context of differential equations on closed intervals (see \cite{alas2012} for an overview).

Having the chain rule at our disposal we can define manifolds and related constructions which are modelled on locally convex spaces.

\begin{definition} Fix a Hausdorff topological space $M$ and a locally convex space $E$ over $\K \in \{\R,\C\}$.
An ($E$-)manifold chart $(U_\kappa, \kappa)$ on $M$ is an open set $U_\kappa \subseteq M$ together with a homeomorphism $\kappa \colon U_\kappa \rightarrow V_\kappa \subseteq E$ onto an open subset of $E$.
Two such charts are called $C^r$-compatible for $r \in \N_0 \cup \{\infty,\omega\}$ if the change of charts map $\nu^{-1} \circ \kappa \colon \kappa (U_\kappa \cap U_\nu) \rightarrow \nu (U_\kappa \cap U_\nu)$ is a $C^r$-diffeomorphism.
A $C^r$-atlas of $M$ is a set of pairwise $C^r$-compatible manifold charts, whose domains cover $M$. Two such $C^r$-atlases are equivalent if their union is again a $C^r$-atlas.

A \emph{locally convex $C^r$-manifold} $M$ modelled on $E$ is a Hausdorff space $M$ with an equivalence class of $C^r$-atlases of ($E$-)manifold charts.
\end{definition}

 Direct products of locally convex manifolds, tangent spaces and tangent bundles as well as $C^r$-maps of manifolds may be defined as in the finite dimensional setting (see \cite[I.3]{MR2261066}).
 The advantage of this construction is that we can now give a very simple answer to the question, what an infinite-dimensional Lie group is:

\begin{definition}
A (locally convex) \emph{Lie group} is a group $G$ equipped with a $C_\K^\infty$-manifold structure modelled on a locally convex space, such that the group operations are smooth.
If the manifold structure and the group operations are in addition ($\K$-) analytic, then $G$ is called a ($\K$-) \emph{analytic} Lie group.
\end{definition}

Later on, we will encounter special classes of infinite-dimensional Lie groups.
We recommend \cite{MR2261066} for a survey on the theory of locally convex Lie groups.

\section{The Lie group structure of character groups of Hopf algebras}

 In this section we recall and explain how to construct a Lie group structure on character groups of Hopf algebras (cf.\ \cite{BDS15}).
 Let us first construct a suitable topology on the character groups which will in the end turn the character group into a Lie group.
 As our notion of differentiability is built on top of continuity (i.e.\ every differentiable map is continuous), the topology needs to turn the character group into a topological group.

 \begin{remark}
 Consider the character group $\Char{\Hopf}{\lcB}$ of a Hopf algebra $\Hopf$ with values in the commutative algebra $\lcB$.
 If $\lcB$  is a locally convex vector space, we  endow the space $\Hom_\K (\Hopf , \lcB)$ with the topology of pointwise convergence (see Example \ref{ex: lcvx:spac}).
 This topology turns $\InfChar{\Hopf}{\lcB}$ and $\Char{\Hopf}{\lcB}$ into closed subsets of $\Hom_\K (\Hopf, \lcB)$ which will always be endowed with the topology of pointwise convergence.
 \end{remark}

 To see that this topology turns the character groups into topological groups we need the group product, the convolution $\star$, to be continuous.
 The product $\star$ is defined via the multiplication of the algebra $\lcB$.
 Thus we will see that $\star$ is continuous if the locally convex structure on $\lcB$ is compatible with the algebra structure, i.e.\ the algebra $\lcB$ is a locally convex algebra.

 \begin{definition}								\label{defn_locally_convex_algebra}
  Let $\lcB$ be an algebra over $\K \in \{ \R,\C\}$. We call $\lcB$ \emph{locally convex algebra} if $\lcB$ is a locally convex vector space such that the algebra product $\mu_\lcB \colon \lcB \times \lcB \rightarrow \lcB$ is a continuous bilinear map.
 \end{definition}

 \begin{remark}
  We have not asked the algebra product to be a continuous linear map on $\lcB \otimes \lcB$ since this would require us to choose a topology on the tensor product.
  As there are different valid choices for topological tensor products between locally convex spaces we refrain from doing this.
  In fact, the whole point of the definition is to avoid topological tensor products.\footnote{Note that it is well known that the algebra multiplication as a bilinear map will be continuous if we require it to be continuous with respect to the projective topological tensor product $B \otimes_\pi B$ (see e.g.\ \cite{MR2296978}). However, one does not gain more information in doing so, whence we avoid discussing this tensor product (or any topology on the tensor product). For this reason one of the authors has been accused of ``cheating''.}
  Though we frequently write $\lcB \otimes \lcB$ also for a locally convex algebra $\lcB$ this will always denote the algebraic tensor product.
 \end{remark}

 \begin{example}
  Every Banach algebra and thus in particular every finite-dimensional algebra (over $\K \in \{\R,\C\}$) is a locally convex algebra.
  Further examples are constructed in Example \ref{ex: mero} below.
 \end{example}

 \begin{lemma}\label{lem: top:gpLA}
  Let $\Hopf$ be a Hopf algebra and $\lcB$ be a locally convex algebra.
  Then the topology of pointwise convergence turns $(\Hom_\K (\Hopf , \lcB) , \star)$ into a locally convex algebra, $(\Char{\Hopf}{\lcB})$ into a topological group and $(\InfChar{\Hopf}{\lcB}, \LB{})$ into a topological Lie algebra.
 \end{lemma}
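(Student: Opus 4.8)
The plan is to prove the three assertions one after another, the only substantive point being the continuity of the convolution product $\star$; the statements about $\Char{\Hopf}{\lcB}$ and $\InfChar{\Hopf}{\lcB}$ then follow formally. Recall from Example~\ref{ex: lcvx:spac} that the topology of pointwise convergence on $\Hom_\K(\Hopf,\lcB)$ is the locally convex topology generated by the seminorms $q_{p,h}(\phi):=p(\phi(h))$, where $p$ runs through a generating family of continuous seminorms of $\lcB$ and $h$ through $\Hopf$; with this topology $\Hom_\K(\Hopf,\lcB)$ is a locally convex space and $\Char{\Hopf}{\lcB}$, $\InfChar{\Hopf}{\lcB}$ are subspaces of it (indeed closed).

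First I would check that $\star$ is a continuous bilinear map, so that $(\Hom_\K(\Hopf,\lcB),\star)$ is a locally convex algebra in the sense of Definition~\ref{defn_locally_convex_algebra}. Fix a continuous seminorm $p$ on $\lcB$ and an element $h\in\Hopf$, and write $\Delta_\Hopf(h)=\sum_{i=1}^{n}a_i\otimes b_i$ for a finite family $a_i,b_i\in\Hopf$ depending only on $h$; then $(\phi\star\psi)(h)=\sum_{i=1}^{n}\phi(a_i)\,\psi(b_i)$. Since $\mu_\lcB$ is continuous and bilinear, there is a continuous seminorm $p'$ on $\lcB$ with $p(xy)\le p'(x)\,p'(y)$ for all $x,y\in\lcB$ (take for $p'$ the maximum of the two seminorms supplied by continuity of $\mu_\lcB$). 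Using the elementary estimate $\sum_i s_it_i\le\bigl(\sum_i s_i\bigr)\bigl(\sum_j t_j\bigr)$ for non-negative reals, we obtain
\[
 q_{p,h}(\phi\star\psi)\;\le\;\sum_{i=1}^{n}p'\bigl(\phi(a_i)\bigr)\,p'\bigl(\psi(b_i)\bigr)\;\le\;\Bigl(\sum_{i=1}^{n}q_{p',a_i}(\phi)\Bigr)\Bigl(\sum_{i=1}^{n}q_{p',b_i}(\psi)\Bigr).
\]
The two factors on the right are continuous seminorms on $\Hom_\K(\Hopf,\lcB)$ evaluated at $\phi$ and $\psi$, so this is exactly the estimate characterising joint continuity of the bilinear map $\star$.

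Next I would deduce that $\Char{\Hopf}{\lcB}$ is a topological group. By the preceding lemma it is a subgroup of $(\Hom_\K(\Hopf,\lcB)^\times,\star)$, hence its multiplication is the restriction of the now-continuous map $\star$ and is continuous for the subspace topology. For inversion, the preceding lemma gives $\phi\mapsto\phi\circ S_\Hopf$; but precomposition by the fixed linear endomorphism $S_\Hopf$ of $\Hopf$ is continuous on $\Hom_\K(\Hopf,\lcB)$, since $q_{p,h}(\phi\circ S_\Hopf)=p\bigl(\phi(S_\Hopf h)\bigr)=q_{p,S_\Hopf h}(\phi)$, and restricting this continuous self-map to the subspace $\Char{\Hopf}{\lcB}$ yields continuity of inversion. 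Therefore $\Char{\Hopf}{\lcB}$ is a topological group.

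Finally, $\LB{}$ is the commutator bracket of $(\Hom_\K(\Hopf,\lcB),\star)$, so $[\phi,\psi]=\phi\star\psi-\psi\star\phi$ is a continuous bilinear map on $\Hom_\K(\Hopf,\lcB)$, being a composition of the continuous map $\star$ with the continuous vector space operations. By the preceding lemma $\InfChar{\Hopf}{\lcB}$ is a Lie subalgebra, and as a subspace of a locally convex space it is a locally convex (in particular topological) vector space on which the restricted bracket is continuous; hence $(\InfChar{\Hopf}{\lcB},\LB{})$ is a topological Lie algebra. The main obstacle is the seminorm estimate for $\star$ above — and even that is mild, the only point requiring care being the bookkeeping of the finite sum coming from $\Delta_\Hopf(h)$; once joint continuity of the convolution is established, the remaining two assertions reduce to the standard fact that restrictions of continuous maps to subspaces remain continuous for the subspace topologies.
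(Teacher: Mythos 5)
Your proof is correct and follows essentially the same route as the paper: continuity of $\star$ is reduced to evaluation at a fixed $c\in\Hopf$, where the finite Sweedler sum together with continuity of point evaluations and of $\mu_\lcB$ gives the result, and inversion is handled as precomposition with the antipode. The only difference is cosmetic — you make the seminorm estimate explicit where the paper invokes the universal property of the initial topology — so nothing further is needed.
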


 \begin{proof}
  The multiplication $\star$ of the character group and the Lie bracket of $\InfChar{\Hopf}{\lcB}$ will be continuous if the algebra product $\star$ on $\Hom_\K (\Hopf, \lcB)$ is continuous.
  Since this algebra is endowed with the topology of pointwise convergence, it suffices to test continuity of $(f,g) \mapsto f \star g$ by evaluating in an element $c \in \Hopf$.
  Using Sweedler notation (see \cite{MR0252485}) for the coproduct we write $\Delta (c) = \sum_{(c)} c_{1} \otimes c_{2}$, whence $f\star g (c) = \sum_{(c)} f(c_{1}) \cdot g(c_{2})$ where the dot is the multiplication in $\lcB$.
  Since point evaluations are continuous on $\Hom_\K (\Hopf,\lcB)$ and the multiplication on $\lcB$ is continuous, we deduce that $\star$ is continuous.

  Observe that inversion in the character group $(\Char{\Hopf}{\lcB},\star)$ is given by precomposition with the antipode.
  Clearly this map is continuous with respect to the topology of pointwise convergence, whence $\Char{\Hopf}{\lcB}$ is a topological group. \qed
 \end{proof}

 Note that as a by-product of Lemma \ref{lem: top:gpLA} we also obtain that $(\Hom_\K (\Hopf,\lcB),\star)$ is a locally convex algebra if $\lcB$ is a locally convex algebra.
 The last proof is a nice example of how continuity can be determined by ``testing'' it at a given point.
 In particular, the topology on the group relegates continuity questions to the topology of the target algebra $\lcB$.

 Our aim is now to turn the topological group $\Char{\Hopf}{\lcB}$ into a Lie group modelled on the infinitesimal characters.
 To do this we will need some additional assumptions on the source Hopf algebra:

 \begin{definition}
  Let $\Hopf = \bigoplus_{n \in \N_0} \Hopf_n$ be a graded and connected Hopf algebra and $\lcB$ be a locally convex algebra.
  Then we define $\cI := \Hom_\K (\bigoplus_{n \in \N} \Hopf_n , \lcB)$ and
    \begin{displaymath}
     \exp \colon \cI \rightarrow \Hom_{\K} (\Hopf, \lcB), \psi \mapsto \sum_{k=0}^\infty \frac{\psi^{\star_k}}{k!} \mbox{ where } \psi^{\star_k} := \underbrace{\psi \star \psi \star \cdots \star \psi}
    \end{displaymath}
 \end{definition}

 \begin{lemma}\label{lem: exp:ana}
  For each $\psi \in \cI$, $\exp(\psi)$ is contained in $\Hom_\K (\Hopf, \lcB)$. 
  Hence it makes sense to define $\exp \colon \Hom_{\K} (\bigoplus_{n \in \N} \Hopf_n , \lcB) \rightarrow \Hom_\K (\Hopf, \lcB)$ and this mapping is $\K$-analytic.
  Furthermore, it restricts to a bijection $\exp \colon \InfChar{\Hopf}{\lcB} \rightarrow \Char{\Hopf}{\lcB}$.
 \end{lemma}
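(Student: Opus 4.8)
The plan is to exploit the pronilpotency of the convolution algebra that the grading forces, the formal inversion of $\exp$ and $\log$, and a short computation in the convolution algebra of the tensor-square Hopf algebra. Throughout I identify $\cI$ with the subspace of $\Hom_\K(\Hopf,\lcB)$ of maps vanishing on $\Hopf_0$ (legitimate since $\Hopf=\Hopf_0\oplus\bigoplus_{n\ge1}\Hopf_n$ with $\Hopf_0\cong\K$); it is a two-sided ideal of $(\Hom_\K(\Hopf,\lcB),\star)$, the augmentation ideal.

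\emph{Well-definedness.} First I would note that for $c\in\Hopf_n$ the $(k{-}1)$-fold iterated coproduct lies in $\bigoplus_{i_1+\dots+i_k=n}\Hopf_{i_1}\otimes\dots\otimes\Hopf_{i_k}$, so once $k>n$ each summand has a tensor factor in $\Hopf_0$; since every $\psi\in\cI$ vanishes there, $\psi^{\star k}(c)=0$ for $k>n$. Hence $\exp(\psi)(c)=\sum_{k=0}^{n}\tfrac1{k!}\psi^{\star k}(c)$ is a finite sum, $\exp(\psi)$ is a well-defined $\K$-linear map, $\exp(\psi)(1_\Hopf)=1_\lcB$ (only $k=0$ contributes on $\Hopf_0$), and $\exp(\psi)\in 1_\lcA+\cI$. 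Because the bound $k\le n$ is independent of $\psi$, the series $\sum_k\tfrac1{k!}\psi^{\star k}$ converges to $\exp(\psi)$ in $\Hom_\K(\Hopf,\lcB)$, with partial sums eventually constant at each point, uniformly in $\psi$.

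\emph{Analyticity.} The topology of pointwise convergence on $\Hom_\K(\Hopf,\lcB)$ is the initial one for the continuous linear point evaluations $\ev_c$, so it suffices to show $\ev_c\circ\exp$ is $C^\omega_\K$ for each $c$; by linearity take $c\in\Hopf_n$, so that $\ev_c\circ\exp=\sum_{k=0}^{n}\tfrac1{k!}\,\ev_c\circ Q_k$ with $Q_k(\psi):=\psi^{\star k}$. Each $Q_k$ is a continuous homogeneous polynomial of degree $k$, being the diagonal restriction of the $k$-linear map $(\psi_1,\dots,\psi_k)\mapsto\psi_1\star\dots\star\psi_k$, whose value at $c$ is $\sum_{(c)}\mu_\lcB^{(k-1)}\bigl(\psi_1(c_{(1)}),\dots,\psi_k(c_{(k)})\bigr)$ and therefore continuous because $\lcB$ is a locally convex algebra and the point evaluations are continuous and linear. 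Thus $\ev_c\circ\exp$ is a polynomial map into $\lcB$. For $\K=\C$ this presents $\exp$ as continuous and globally equal to a uniformly convergent series of continuous homogeneous polynomials, hence $C^\omega_\C$ by Remark~\ref{rem: analytic}; for $\K=\R$ one runs the same argument over the complexified Hopf algebra.

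\emph{The bijection.} Since $\Hopf$ is graded with $\Hopf_0\cong\K$, $\Hom_\K(\Hopf,\lcB)$ is complete for the filtration by the ideals $\{f: f|_{\Hopf_0\oplus\dots\oplus\Hopf_{n-1}}=0\}$, and $\cI$ is the first term; by the standard argument for complete filtered algebras (the formal identities $\log\exp=\mathrm{id}$, $\exp\log=\mathrm{id}$ together with the fact that all relevant sums truncate on each $\Hopf_n$), $\exp\colon\cI\to 1_\lcA+\cI$ is a bijection with inverse $\log(\phi)=\sum_{k\ge1}\tfrac{(-1)^{k-1}}{k}(\phi-1_\lcA)^{\star k}$. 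Now $\InfChar{\Hopf}{\lcB}\subseteq\cI$ and $\Char{\Hopf}{\lcB}\subseteq 1_\lcA+\cI$ (infinitesimal characters kill $1_\Hopf$, characters send it to $1_\lcB$), so it remains to match images. For that I would pass to the convolution algebra $\Hom_\K(\Hopf\otimes\Hopf,\lcB)$ of the (graded, connected) tensor-square Hopf algebra; $F:=(\,\cdot\,)\circ m_\Hopf$ is a unital algebra homomorphism into it (because $m_\Hopf$ is a coalgebra morphism, by the bialgebra axioms), it preserves the filtrations, hence it intertwines $\exp$ and $\log$. Writing $\ell(f):=\mu_\lcB\circ(f\otimes 1_\lcA)$ and $r(f):=\mu_\lcB\circ(1_\lcA\otimes f)$, equations \eqref{eq char:char} and \eqref{eq: InfChar:char} translate (for $\phi\in1_\lcA+\cI$, $\psi\in\cI$) into
\[
 \phi\in\Char{\Hopf}{\lcB}\iff F(\phi)=\ell(\phi)\star r(\phi),\qquad \psi\in\InfChar{\Hopf}{\lcB}\iff F(\psi)=\ell(\psi)+r(\psi).
\]
Commutativity of $\lcB$ makes $\ell(f)$ and $r(f)$ commute, and one checks $\exp(\ell(\psi))=\ell(\exp(\psi))$, $\exp(r(\psi))=r(\exp(\psi))$ and the analogues for $\log$. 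Hence for $\psi\in\InfChar{\Hopf}{\lcB}$,
\[
 F(\exp(\psi))=\exp(F(\psi))=\exp\bigl(\ell(\psi)+r(\psi)\bigr)=\exp(\ell(\psi))\star\exp(r(\psi))=\ell(\exp(\psi))\star r(\exp(\psi)),
\]
so $\exp(\psi)\in\Char{\Hopf}{\lcB}$; likewise, applying $\log$ to $F(\phi)=\ell(\phi)\star r(\phi)$ for $\phi\in\Char{\Hopf}{\lcB}$ yields $F(\log(\phi))=\ell(\log(\phi))+r(\log(\phi))$, i.e. $\log(\phi)\in\InfChar{\Hopf}{\lcB}$. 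As $\exp$ and $\log$ are mutually inverse, $\exp$ restricts to a bijection $\InfChar{\Hopf}{\lcB}\to\Char{\Hopf}{\lcB}$. I expect the real work to be exactly this last step — verifying the two displayed equivalences, the commutation $\ell(f)\star r(f)=r(f)\star\ell(f)$, and the intertwining identities, which all hinge on commutativity of $\lcB$ and the bialgebra axioms — and keeping every application of $\exp$ and $\log$ inside the relevant augmentation ideal so that the pronilpotent bookkeeping stays valid.
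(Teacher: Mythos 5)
Your proof is correct and takes essentially the same route as the paper: the well-definedness step (only the terms with $k\le n$ of the exponential series survive on $\Hopf_n$, by the densely graded property of $\prod_{n}\Hom_\K(\Hopf_n,\lcB)$) is identical, and the two points the paper delegates to \cite{BDS15} (Lemmas B.6 and B.11) --- analyticity via componentwise polynomiality, and the $\exp$/$\log$ bijection via the convolution algebra of $\Hopf\otimes\Hopf$ and the characterisations $F(\phi)=\ell(\phi)\star r(\phi)$, $F(\psi)=\ell(\psi)+r(\psi)$ --- are exactly the standard arguments you supply in full. The one place where your extra care is genuinely needed is real analyticity, since a componentwise check does not suffice for $C^\omega_\R$ in general; your global extension over $\Hopf_\C$ and $\lcB_\C$ handles this correctly.
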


 \begin{proof}
   We have the following isomorphisms of locally convex spaces
   \begin{displaymath}
    \Hom_\K (\Hopf, \lcB) = \Hom_\K (\bigoplus_{n \in \N_0} \Hopf_n , \lcB) \cong \prod_{n \in \N_0} \Hom_\K (\Hopf_n , \lcB).
   \end{displaymath}
  Note that $\Hom_\K (\Hopf, \lcB)$ is \textbf{not} a graded algebra (as a grading requires it to decompose into a direct sum).
  However, the convolution satisfies
  \[\Hom_\K (\Hopf_n , \lcB) \star \Hom_\K (\Hopf_m , \lcB) \subseteq \Hom_{\K} (\Hopf_{n+m},\lcB) \quad \hbox{ for all } m,n\in\N_0.
  \]
  We call an algebra with this property a \emph{densely graded algebra} (to express that it decomposes as a product and not as a direct sum).

  Now for each $n \in \N$ the formula for $\exp$ yields only finitely many summands in $\Hom_\K (\Hopf_n , \lcB)$ (as elements in its domain contain no contribution of $\Hom_\K (\Hopf_0 , \lcB)$).
  We deduce that $\exp$ makes sense as a mapping to $\prod_{n \in \N_0} \Hom_\K (\Hopf_n,\lcB)$ whence it makes sense as a mapping into $\Hom_\K (\Hopf, \lcB)$.
  To see that $\exp$ is $\K$-analytic one uses functional calculus for densely graded algebras (see \cite[Lemma B.6]{BDS15} for details).

  That $\exp$ restricts to a bijection $\InfChar{\Hopf}{\lcB} \rightarrow \Char{\Hopf}{\lcB}$ is well known in the literature. We refer to \cite[Lemma B.11]{BDS15} for a proof. \qed
 \end{proof}

 \begin{remark}
  Densely graded algebras as discussed in the proof of Lemma \ref{lem: exp:ana} are an important tool in the investigation of character groups of Hopf algebras.
  Indeed they can be used to link the investigation of character groups to the Lie theory for unit groups of continuous inverse algebras (see \cite{MR1948922,MR2997582}).
  For more details we refer to \cite{BDS15}.
 \end{remark}

 Using $\exp$ as a (global) parametrisation for $\Char{\Hopf}{\lcB}$ we obtain a manifold structure on the character group and it turns out that this renders the group a Lie group.

 \begin{theorem}[{\cite[Theorem 2.7]{BDS15}}]\label{thm: main}
 Let $\Hopf$ be a graded and connected Hopf algebra and $\lcB$ be a locally convex algebra.
 Then the manifold structure induced by the global parametrisation $\exp \colon \InfChar{\Hopf}{\lcB} \rightarrow \Char{\Hopf}{\lcB}$ turns $(\Char{\Hopf}{\lcB},\star)$ into a $\K$-analytic Lie group.

 The Lie algebra associated to this Lie group is $(\InfChar{\Hopf}{\lcB}, \LB{})$.
 Moreover, the Lie group exponential map is given by the real analytic diffeomorphism $\exp$ from Lemma \ref{lem: exp:ana}.
 \end{theorem}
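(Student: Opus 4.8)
Throughout, write $\cA := \Hom_\K(\Hopf,\lcB)$ for the densely graded convolution algebra of Lemma~\ref{lem: exp:ana}, with unit $\one := u_\lcB\circ\epsilon_\Hopf$, and recall that $\cI = \Hom_\K(\bigoplus_{n\in\N}\Hopf_n,\lcB)$ is a closed ideal of $\cA$ with $\cI^{\star k}\subseteq\prod_{n\geq k}\Hom_\K(\Hopf_n,\lcB)$, i.e.\ $\cI$ is pro-nilpotent. The plan is to realise $\Char{\Hopf}{\lcB}$ inside the affine subset $\one+\cI\subseteq\cA$ and to transport everything through the bijection $\exp$; this is the route suggested by the Remark following Lemma~\ref{lem: exp:ana}, linking the problem to the Lie theory of unit groups.

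\emph{Step 1: $\one+\cI$ is a $\K$-analytic Lie group modelled on $\cI$.} Pro-nilpotency makes the power series $\exp\colon\cI\to\one+\cI$ and $\log\colon\one+\cI\to\cI$, $\log(\one+\eta)=\sum_{k\geq1}\frac{(-1)^{k+1}}{k}\eta^{\star k}$, well defined (only finitely many homogeneous contributions in each degree), mutually inverse (the identities already hold for formal power series), and $\K$-analytic by the functional calculus for densely graded algebras used to prove Lemma~\ref{lem: exp:ana}. Hence $\log$ is a global chart turning $\one+\cI$ into an analytic manifold, whose topology is the subspace topology from $\cA$ since $\log$ is a homeomorphism. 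In this chart, multiplication reads $(\eta_1,\eta_2)\mapsto\log\big((\one+\eta_1)\star(\one+\eta_2)\big)$ and inversion reads $\eta\mapsto\log\big(\sum_{k\geq0}(-\eta)^{\star k}\big)$, both compositions of the continuous (hence analytic) bilinear map $\star$ with $\exp$, $\log$ and the analytic geometric series, so $\one+\cI$ is an analytic Lie group. Its Lie group exponential is $\exp$: since $s\psi$ and $t\psi$ commute under $\star$, the curve $t\mapsto\exp(t\psi)$ is a one-parameter subgroup with velocity $\psi$, and these are unique. Expanding the multiplication to second order (equivalently, Baker--Campbell--Hausdorff in the pro-nilpotent algebra $\cA$) identifies the bracket of $L(\one+\cI)$ with the commutator bracket $\LB{}$ of $\star$, so $L(\one+\cI)=(\cI,\LB{})$.

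\emph{Step 2: $\Char{\Hopf}{\lcB}$ as an embedded analytic Lie subgroup.} Each $\phi\in\Char{\Hopf}{\lcB}$ has $\phi(1_\Hopf)=1_\lcB$ and agrees with $\one$ on $\Hopf_0\cong\K$, so $\phi-\one\in\cI$; thus $\Char{\Hopf}{\lcB}$ is a subgroup of $\one+\cI$, closed for the topology of pointwise convergence by Lemma~\ref{lem: top:gpLA}. By Lemma~\ref{lem: exp:ana} the chart $\exp^{-1}=\log$ carries it bijectively onto $\InfChar{\Hopf}{\lcB}$, which is a closed vector subspace of $\cI$ (an infinitesimal character vanishes on $1_\Hopf$, hence on $\Hopf_0$, and kills the decomposable elements in each degree) and is complemented in $\cI$ by a choice of linear complement to the decomposables in each degree. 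Hence $\log$ exhibits $\Char{\Hopf}{\lcB}$ as a closed split submanifold of $\one+\cI$ adapted to $\InfChar{\Hopf}{\lcB}$; being a subgroup, it is an embedded $\K$-analytic Lie subgroup modelled on $\InfChar{\Hopf}{\lcB}$, and the resulting structure is exactly the one induced by the global parametrisation $\exp$. The general fact used here is the corestriction principle: an analytic map into $\cI$ with values in the complemented closed subspace $\InfChar{\Hopf}{\lcB}$ is analytic into that subspace — for $\K=\R$ one composes the complex extension with the complexified projection $\cI_\C\to(\InfChar{\Hopf}{\lcB})_\C$.

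\emph{Step 3 and the main obstacle.} For the embedded Lie subgroup $\Char{\Hopf}{\lcB}\leq\one+\cI$, the Lie algebra is the tangent space at the unit, i.e.\ the model space $\InfChar{\Hopf}{\lcB}$ with the bracket inherited from $(\cI,\LB{})$ of Step~1, which by Lemma~\ref{lem: top:gpLA}(2) is precisely $(\InfChar{\Hopf}{\lcB},\LB{})$. Likewise the Lie group exponential of an embedded Lie subgroup is the restriction of that of the ambient group, so $\exp_{\Char{\Hopf}{\lcB}}=\exp|_{\InfChar{\Hopf}{\lcB}}$, which is the analytic diffeomorphism of Lemma~\ref{lem: exp:ana} (in the charts $\mathrm{id}$ on the domain and $\exp^{-1}$ on the range it is the identity of $\InfChar{\Hopf}{\lcB}$). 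The only substantial work is Step~1: the functional calculus on the densely graded algebra $\cA$ — componentwise convergence and analyticity of the $\exp$ and $\log$ series and their behaviour under complexification when $\K=\R$ — together with the corestriction lemma of Step~2. Both are established in \cite[App.~B]{BDS15} and already underlie Lemma~\ref{lem: exp:ana}, so here they only need to be assembled.
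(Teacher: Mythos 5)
Your proposal is correct and follows essentially the same route as the paper (and the underlying reference \cite{BDS15}): use the functional calculus of the densely graded convolution algebra to make $\exp$/$\log$ a global analytic chart, realise $\Char{\Hopf}{\lcB}$ as a closed subgroup of $\one+\cI$ mapped onto the closed complemented subspace $\InfChar{\Hopf}{\lcB}$, and read off the Lie algebra and exponential from the ambient BCH-structure. This is exactly the link to unit groups of densely graded algebras indicated in the remark following Lemma~\ref{lem: exp:ana}, so no further comparison is needed.
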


 \begin{remark}
   The assumption that the Hopf algebra $\Hopf$ is graded and connected is crucial for the proof of Theorem \ref{thm: main}.
 If we drop this assumption then character groups need not be infinite-dimensional Lie groups (see \cite[Example 4.11]{BDS15} for an explicit counterexample).
 However, the construction does not depend on the choice of the grading, i.e.\ two connected gradings on a Hopf algebra will yield the same Lie group structure on the character group.
 See for example \cite[Proposition 1.30]{MR2371808} where different (connected) gradings on the Hopf algebra of Feynman graphs are discussed.
 \end{remark}

 In the literature, the infinitesimal characters are not the only Lie algebra associated with the group $\Char{\Hopf}{\lcB}$.
 Recall that for certain Hopf algebras there is a Lie algebra which is associated to the Hopf algebra by the Milnor--Moore theorem.

 \begin{remark}[The Lie algebra $\InfChar{\Hopf}{\K}$ of $\Char{\Hopf}{\K}$ and the Milnor--Moore theorem] \label{rem: MMthm}
  Consider a graded, connected and commutative Hopf algebra $\Hopf = \bigoplus_{n\in \N_0} \Hopf_n$ such that each $\Hopf_n$ is a finite-dimensional vector space.
  Then the Milnor--Moore theorem \cite{MR0174052} asserts that there is a Lie algebra $\mbox{Lie} (\Hopf)$ such that $\Hopf^\vee \cong \cU(\mbox{Lie} (\Hopf))$, where $\Hopf^\vee \cong \bigoplus_{n \in \N_0} \Hom_\K (\Hopf_n,\K)$ is the restricted dual.
  This Lie algebra is often associated with the Lie algebra of the character group $\Char{\Hopf}{\K}$ and indeed one can identify
    \begin{displaymath}
     \mbox{Lie} (\Hopf) \cong \InfChar{\Hopf}{\K} \cap \bigoplus_{n \in \N_0} \Hom_\K (\Hopf_n, \K) = \InfChar{\Hopf}{\K} \cap \Hopf^\vee.
    \end{displaymath}
  Note that the Lie algebra $\InfChar{\Hopf}{\K}$ will in general be strictly larger than $\mbox{Lie} (\Hopf)$ but with respect to the topology of pointwise convergence $\mbox{Lie} (\Hopf)$ is a dense subalgebra of $\InfChar{\Hopf}{\K}$.
  This fact is frequently exploited (cf.\ \cite{CK98,MR2371808,MR2346399}) as it connects the Hopf algebra $\Hopf$ with the Lie algebra $\InfChar{\Hopf}{\K}$.
 \end{remark}

 Before we continue with the general theory let us develop some examples which arise from applications in numerical analysis and mathematical physics.
 In Example \ref{ex: charactergroups}, we have already seen character groups of (graded and connected) Hopf algebras which arise naturally in applications.
 Most of these groups are constructed with the ground field $\K$ as target algebra.
 However, in the theory of renormalisation of quantum field theories one also considers characters with values in an infinite-dimensional locally convex algebra $\lcB$.
 We construct this algebra in the next example.
\newcommand{\Poly}{\mathcal P}

 \newcommand{\gz}{\text{germ}_0}
 \begin{example}\label{ex: mero}
  Consider the \emph{set of germs of meromorphic functions} in $0 \in \C$
    \begin{displaymath}
     \Mero = \{\mbox{germ}_0 f \mid f \colon  U \rightarrow \C\cup\{\infty\} \mbox{ meromorphic and } 0 \in U \subseteq \C \mbox{ open}\},
    \end{displaymath}
   where as usual $\mbox{germ}_0 f = \mbox{germ}_0 g$ if $f$ and $g$ coincide on some $0$-neighbourhood.
  Then $\Mero$ can be made a locally convex space as an inductive limit of Banach spaces such that
    \begin{enumerate}
     \item $\Mero$ is a complete locally convex algebra with respect to multiplication of germs,
     \item as a closed subspace of $\Mero$, the set $\Holo$ of germs of holomorphic functions is the inductive limit of the Banach spaces $\{(\BHol (U_n, \C),\norm{\cdot}_\infty ) \}_{n \in \N}$ of bounded holomorphic function on open, relatively compact sets $U_n$ which form a base of zero neighbourhoods. In particular, this structure turns $\Holo$ into a locally convex subalgebra.
    \end{enumerate}
  Character groups of graded connected Hopf algebras with values in $\Mero$ and $\Holo$ are studied in the context of the Connes--Kreimer theory of renormalisation (cf.\ \cite[p.83]{MR2371808}).
  Note that we can apply Theorem \ref{thm: main} to turn these character groups (cf.\ Example \ref{ex: charactergroups}) into Lie groups.
 \end{example}
 \begin{proof}[Construction of the topology on $\Mero$]
  \textbf{Step 1:} \emph{$\Holo$ as a locally convex algebra.}

  Fix $U_n = B_{\frac{1}{n}}^\C (0)$ for $n \in \N$ (open ball in $\C$ around $0$ of radius $\tfrac{1}{n}$). 
  For later use observe that $U_n$ is relatively compact, $\overline{U_{n+1}} \subseteq U_n$ for all $n$ and $\{U_n\}_{n \in \N}$ is a base of zero-neighbourhoods.
  Recall that the space $\BHol (U_n , \C)$ of bounded holomorphic functions is a Banach algebra with respect to the supremum norm.
  The space $\Holo$ of germs of holomorphic functions around $0$ can be realised as the inductive locally convex limit of the spaces $\BHol(U_n,\C)$.
  The bonding maps of the inductive system are given by
  \[
   \iota_{n,m} \colon \BHol(U_n,\C) \rightarrow \BHol(U_{m},\C), \quad f \mapsto f|_{U_{m}}
  \]
  and these maps are compact operators for $m>n$ (see \cite[Section 8]{MR1471480} or \cite[Appendix A]{DS15}).
  Hence $\Holo$ is a so called \emph{Silva space}.
  These spaces have many nice properties (cf.\  \cite[52.37]{MR1471480} and \cite{MR0287271}), in particular, they are Hausdorff and complete.
  Bilinear mappings on Silva spaces are continuous if they are continuous on each step of the limit (see e.g.~\cite[Proposition 4.5]{MR2743766}).
  Thus multiplication of germs is continuous and $\Holo$ is a locally convex algebra.
  \medskip

  \textbf{Step 2:} \emph{Polynomials $\Poly^\infty (X)$ without constant term as a Silva space.}

  Consider the space of polynomials $\Poly^\infty(X):=\C[X]=\text{span}\{X^0,X^1,X^2,\ldots\}$ in the formal variable $X$ and let us denote by $\Poly^\infty_*:=\text{span}\{X^1,X^2,\ldots\}$ the subspace of polynomials without constant term.
  This last space is a Silva space as the direct union $\Poly^\infty_*(X)=\bigcup_{n\in\N} \Poly^n_*(X)$ of finite dimensional spaces $\Poly^n_*(X):= \text{span}(X^1,\ldots, X^n)$.
  The bonding maps $\Poly^n_*(X)\to \Poly^{m}_*(X)$ are compact operators for $m\geq n$ as the corresponding spaces are finite dimensional.
  \medskip

  \textbf{Step 3:} \emph{The topology on $\Mero$.}

  Consider a meromorphic function $g \colon \Omega \rightarrow \C\cup\{ \infty\}$ defined on a $0$-neighbourhood $\Omega$.
  Since the set of poles is discrete, we find an $n\in\N$ such that $g$ has no poles in $\overline{U_n}\subseteq \Omega$, except possibly at $0$.
  Furthermore, write $g=f+p(1/z)$ where $f\in\BHol(U_n,\C)$ and $p\in\Poly^\infty_*(1/z)$ is a polynomial in the variable $1/z$ (without constant term).

  As $g$ was arbitrary,  the vector space $\Mero$ is a direct sum of the two vector subspaces $\Holo$ and $\Poly^\infty_*(1/z)$.
  Both of the summands have a natural Silva space structure, which can be used to topologise $\Mero =\Holo\oplus \Poly^\infty_*(1/z)$.
  The locally convex space
  \[
   \Mero=\lim_{\to} \left( \BHol(U_n,\C)\oplus \Poly^n_*(1/z) \right)
  \]
  thus becomes a Silva space as a sum of two Silva spaces (see \cite[Corollary 8.6.9]{MR880207}).

  This  construction can also be found in \cite{GrosseErdmann} where also spaces of meromorphic functions (rather than just germs of such functions) are given a topology using a similar construction.
  For meromorphic functions the multiplication turns out to be only separately continuous (see \cite[Theorem 5]{GrosseErdmann}) and so these spaces are no locally convex algebras in the sense of Definition \ref{defn_locally_convex_algebra}.
  \medskip

  \textbf{Step 4:} \emph{$\Mero$ as a locally convex algebra.}

  We now exploit that the space of germs $\Mero$ is a Silva space by Step 3 to prove that multiplication is (jointly) continuous.
  Computing on the steps of the limit, fix $n\in \N$ and set $E_n := \BHol(U_n,\C)\oplus\Poly^n_*(1/z)$.
  Let us now prove that the multiplication map (co-)restricts to a continuous map $\mu_n \colon E_n \times E_n \rightarrow E_{2n}$.
  As a tool we use the operator
  \[
   \Phi_n \colon \BHol(U_n,\C)\oplus \Poly^n_*(1/z) \rightarrow \BHol(U_n,\C), \quad f+p(1/z) \mapsto z^nf + z^np(1/z)
  \]
  which is easily seen to be linear continuous and bijective.
  Hence the Open Mapping Theorem for Banach spaces implies that $\Phi_n$ is a topological isomorphism.
  Using $\Phi_n$, we write $\mu_n$ as composition of continuous maps:
  \[
   \mu_n=\Phi_{2n}^{-1}\circ \iota_{n,2n} \circ \mu_{\BHol(U_n,\C)}\circ (\Phi_n\times\Phi_n)
  \]
  where $\mu_{\BHol(U_n,\C)}$ is the continuous multiplication in $\BHol(U_n,\C)$.
  This shows that $\Mero$ is a locally convex algebra. \qed
 \end{proof}

 \begin{remark}
  It should be noted that although $\Mero$ is algebraically a field and a locally convex algebra, inversion is \emph{not} continuous with respect to the topology just described, hence $\Mero$ is not a topological field. This however is no defect of this particular construction but follows from the (locally convex) Gelfand Mazur Theorem: There is no complex locally convex division algebra -- except $\C$ (see e.g.~\cite[Remark 4.15]{MR1948922} or \cite[Theorem 1]{ArensLinearTopologicalDivisionAlgebras}).
 \end{remark}

 We have already encountered the character group of the shuffle Hopf algebra (see Example \ref{ex: shuffle} and Example \ref{ex: charactergroups}).
 Recently, these groups were used as building blocks for a group of extended word series which is of interest in the discretisation of dynamical systems and in the computation of normal forms for these systems (see \cite{1502.05528v2,MR3485151}).
 In the following example we will revisit this construction.

 \begin{example}\label{ex: shuffle:char}
  Let $\cG := \Char{\text{Sh} (A)}{\C}$ be the complex valued character group of the shuffle Hopf algebra $\text{Sh} (A)$.
  Since $\text{Sh} (A)$ is a graded and connected Hopf algebra by Example \ref{ex: shuffle}, Theorem \ref{thm: main} implies that $\cG$ is a Lie group with the topology of pointwise convergence.
  The goal pursued in \cite{MR3485151} is to study a certain class of ordinary differential equation.
  This leads one to consider so called ``extended word series''.
  These series are elements in a semidirect product $\cG \rtimes_\Xi \C^d$ of the groups $\cG$ and $\C^d$ for some $d \in \N$.
  Here the group morphism $\Xi \colon \C^d \rightarrow \text{Aut} (\cG), z \mapsto \Xi_z$ is given for $\delta \in \cG$, $z := (z_1, \ldots, z_d) \in \C^d$ and $w = a_n\ldots a_1\in A^*$ by
    \begin{align*}
     \Xi_z (\delta) (w) = \exp\left(\sum_{k=1}^d z_k (\nu_{k, a_1} + \ldots +\nu_{k,a_n})\right) \cdot \delta (w)
    \end{align*}
  where the numbers $\nu_{k,a} \in \C$ are fixed for all $1 \leq k \leq d$ and $a \in A$.\footnote{These numbers depend on the structure of a certain ordinary differential equation. We refer to \cite{MR3485151} for more details.}
  Note that the map $\Xi$ takes its image indeed in $\text{Aut} (\cG)$ by virtue of \cite[4.2]{MR3485151}.
  Moreover, it is easy to see that the mapping $\Xi^\wedge \colon \C^d \times \cG \rightarrow \cG , (z ,\delta) \mapsto \Xi_z (\delta)$ is smooth, i.e.\ $\Xi^\wedge$ is a Lie group action.
  Hence we obtain a semidirect product of Lie groups $\cG \rtimes_\Xi \C^d$.
  In \cite{MR3485151} the authors then proceed to study the Lie algebra $\Lf (\cG \rtimes_\Xi \C^d) = \Lf (\cG) \rtimes_{\Lf (\Xi)} \C^d$, differential equations on $\cG \rtimes_\Xi \C^d$ and properties of the Lie group exponential.
 \end{example}

 Finally, one can show that certain closed subgroups of $\Char{\Hopf}{\lcB}$ which are associated to Hopf ideals are Lie subgroups.\footnote{Contrarily to the situation for finite-dimensional Lie groups, not every closed subgroup of an infinite-dimensional Lie group is again a Lie subgroup. See \cite[Remark IV.3.17]{MR2261066} for an example.}

 \begin{definition}[Hopf ideal and annihilator] \label{defn: Hideal}
Let $\Hopf$ be a Hopf algebra.
We say $\HIdeal\subseteq \Hopf$ is a \emph{Hopf ideal} if the subset $\HIdeal$ is
\begin{enumerate}
\item a two-sided (algebra) ideal,
\item a coideal, i.e.\ $\epsilon(\HIdeal)=0$ and $\Delta(\HIdeal) \subseteq \HIdeal\otimes \Hopf + \Hopf \otimes \HIdeal$ and
\item stable under the antipode, i.e.\ $S(\HIdeal)\subseteq \HIdeal$.
\end{enumerate}
Let $\lcB$ be a locally convex algebra, then we define the \emph{annihilator of $\HIdeal$}
 \[
  \Ann(\HIdeal, \lcB)= \{\phi \in \Hom_\K(\Hopf,\lcB) \mid \phi(\HIdeal)=0_\lcB\},
 \]
 which is a closed unital subalgebra of $\Hom_\K(\Hopf,\lcB)$.
\end{definition}

\begin{proposition}[{\cite[Theorem 3.4]{BDS15}}]\label{prop: ann:sbgp}
 Let $\Hopf$ be a graded connected Hopf algebra, $\HIdeal\subseteq \Hopf$ be a Hopf ideal and $\lcB$ be a commutative locally convex algebra.

 Then $\AnnGroup \subseteq \Char{\Hopf}{\lcB}$ is a closed Lie subgroup whose Lie algebra is $\AnnLieAlgebra \subseteq \InfChar{\Hopf}{\lcB}$.
 Moreover, $\AnnGroup$ is even an exponential BCH--group.
\end{proposition}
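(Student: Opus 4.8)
The plan is to obtain the whole statement by restricting the global analytic parametrisation $\exp$ of the ambient Lie group $\Char{\Hopf}{\lcB}$ from Theorem~\ref{thm: main} to the subgroup $\AnnGroup$. First I would dispatch the algebraic bookkeeping. Since $\HIdeal$ is a coideal, $\epsilon_\Hopf(\HIdeal)=0$ gives $\HIdeal\subseteq\ker\epsilon_\Hopf$, and from $\Delta_\Hopf(\HIdeal)\subseteq\HIdeal\otimes\Hopf+\Hopf\otimes\HIdeal$ one reads off that $(\phi\star\psi)(j)=\mu_\lcB\bigl((\phi\otimes\psi)(\Delta_\Hopf j)\bigr)=0$ for $\phi,\psi\in\Ann(\HIdeal,\lcB)$ and $j\in\HIdeal$; together with $1_\lcA=u_\lcB\circ\epsilon_\Hopf\in\Ann(\HIdeal,\lcB)$ this reproves that $\Ann(\HIdeal,\lcB)$ is a closed unital subalgebra of $(\Hom_\K(\Hopf,\lcB),\star)$ (the claim made in Definition~\ref{defn: Hideal}), and hence that $\AnnLieAlgebra$ is a closed Lie subalgebra of $(\InfChar{\Hopf}{\lcB},\LB{})$. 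Using $S_\Hopf(\HIdeal)\subseteq\HIdeal$ and the description of inversion in Lemma~\ref{lem: top:gpLA}, precomposition with $S_\Hopf$ preserves $\Ann(\HIdeal,\lcB)$, so $\AnnGroup$ is a subgroup of $\Char{\Hopf}{\lcB}$, and it is closed, being $\Char{\Hopf}{\lcB}\cap\Ann(\HIdeal,\lcB)$.

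The crux is to show that $\exp\colon\InfChar{\Hopf}{\lcB}\to\Char{\Hopf}{\lcB}$ of Lemma~\ref{lem: exp:ana} maps $\AnnLieAlgebra$ bijectively onto $\AnnGroup$. In the forward direction, if $\psi\in\AnnLieAlgebra$ then every convolution power $\psi^{\star k}$ lies in the closed subalgebra $\Ann(\HIdeal,\lcB)$, so the convergent series defining $\exp(\psi)$ has all partial sums in $\Ann(\HIdeal,\lcB)$ and hence so does its limit; as $\exp(\psi)$ is also a character, $\exp(\psi)\in\AnnGroup$. In the converse direction, for $\phi\in\AnnGroup$ the $\exp$-preimage furnished by Lemma~\ref{lem: exp:ana} is $\psi=\log\phi=\sum_{k\ge1}\frac{(-1)^{k+1}}{k}(\phi-1_\lcA)^{\star k}$; since $\phi-1_\lcA\in\Ann(\HIdeal,\lcB)$, the same closedness argument forces $\psi\in\Ann(\HIdeal,\lcB)$, hence $\psi\in\AnnLieAlgebra$. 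Thus the analytic diffeomorphism $\exp$ restricts to a bijection $\AnnLieAlgebra\to\AnnGroup$.

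It remains to promote this to a submanifold statement, and I expect this to be the genuine obstacle: one must check that $\AnnLieAlgebra$ is a complemented closed subspace of the modelling space $\InfChar{\Hopf}{\lcB}$. An infinitesimal character vanishes on $\K 1_\Hopf$ and on $(\ker\epsilon_\Hopf)^2$, so evaluation yields a topological isomorphism $\InfChar{\Hopf}{\lcB}\cong\Hom_\K\bigl(\ker\epsilon_\Hopf/(\ker\epsilon_\Hopf)^2,\lcB\bigr)$ under which $\AnnLieAlgebra$ becomes the subspace of functionals vanishing on the image of $\HIdeal$; picking an algebraic complement of that image in the indecomposables and extending functionals by zero produces a continuous linear projection, so $\AnnLieAlgebra$ splits. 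Consequently the global chart $\exp^{-1}$ of $\Char{\Hopf}{\lcB}$ carries $\AnnGroup$ onto the complemented closed subspace $\AnnLieAlgebra$, which exhibits $\AnnGroup$ as a split Lie subgroup (in particular the inclusion is an analytic embedding); since $d\exp(0)=\mathrm{id}$, its Lie algebra is $(\AnnLieAlgebra,\LB{})$.

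Finally, the Lie group exponential of $\AnnGroup$ is the restriction of $\exp$, a global diffeomorphism onto $\AnnGroup$, so $\AnnGroup$ is an exponential Lie group; and the Baker--Campbell--Hausdorff series, which converges on $\Hom_\K(\Hopf,\lcB)$ by the densely-graded functional calculus already invoked in the proof of Lemma~\ref{lem: exp:ana}, restricts to the closed Lie subalgebra $\AnnLieAlgebra$ and computes the group product in $\exp$-coordinates, so $\AnnGroup$ is even an exponential BCH--group. Apart from the complementedness of $\AnnLieAlgebra$, all of this is inherited from $\Char{\Hopf}{\lcB}$ essentially formally; the single point requiring actual work is confirming that the abstract Lie group structure $\AnnGroup$ carries via the restricted $\exp$ is realised by a submanifold structure inside $\Char{\Hopf}{\lcB}$.
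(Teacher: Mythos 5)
The paper itself offers no proof of this proposition, only the citation to \cite[Theorem 3.4]{BDS15}, and your argument follows the natural route that the cited proof takes: verify that $\Ann(\HIdeal,\lcB)$ is a closed unital subalgebra stable under precomposition with $S_\Hopf$, show that the global parametrisation $\exp$ of Lemma~\ref{lem: exp:ana} restricts to a bijection $\AnnLieAlgebra\to\AnnGroup$, and check that $\AnnLieAlgebra$ is a complemented closed subspace of the modelling space so that the single global chart exhibits a split submanifold. Your write-up is correct, and you rightly identify the complementation of $\AnnLieAlgebra$ (via the identification $\InfChar{\Hopf}{\lcB}\cong\Hom_\K\bigl(\ker\epsilon_\Hopf/(\ker\epsilon_\Hopf)^2,\lcB\bigr)$ and a continuous projection dual to an algebraic splitting) as the only point needing genuine work.
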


\begin{example}[Annihilator subgroups]
 \begin{enumerate}
  \item In \cite{MR2350437} Hopf ideals of the Hopf algebra of Feynman graphs $\Hopf_{FG}$ (cf.\ Example \ref{ex: charactergroups} b)) are studied. In the physical theory these ideals implement the so called ``Ward--Takahashi'' and ``Slavnov--Taylor'' identities.
  Then in \cite{MR2520515} character groups related to the annihilator subgroups of these Hopf ideals are studied.
  \item It is well known that the tree maps associated to numerical integration schemes which preserve certain first order integrals form a subgroup of the Butcher group, called the \emph{group of symplectic tree maps} $S^\K_{\text{TM}}$.
  This subgroup is the annihilator subgroup of a certain Hopf ideal in the Hopf algebra of rooted trees $\Hopf_{CK}^\K$ (see Example \ref{ex: RT}).
  We refer to \cite[Example 4/9]{BDS15} for more details.
  \item Recently in \cite{MR2946461} an even smaller subgroup of the group of symplectic tree maps (as discussed in 2.) has been considered.
  This group $\widehat{\cG}$ consists of all elements in the Butcher group such that the operations forming B-series and changing variables in the vector field commute (see \cite[Proposition 3.1]{MR2946461} for the detailed statement).
  Moreover, following \cite[Eq. (56)]{MR2946461} one can prove that elements in $\widehat{\cG}$ admit a more ``compact'' expansion as B-series.

  Due to loc.\ cit.\ and \cite[Remark 24]{MR2271214} this group is the annihilator of a Hopf ideal $\cI$ in $\Hopf_{CK}^\K$, whence a Lie group by Proposition \ref{prop: ann:sbgp}.
  Further, we remark that the ideal $\cI$ is of interest in its own right:
  As pointed out in \cite[Section 7]{MR2271214} the ideal $\cI$ appears as the kernel of a Hopf algebra morphism used to interpret the theory developed in loc.\ cit.\ in the context of Hopf algebras.
 \end{enumerate}
\end{example}

 \section{Notes on the topology of the character groups}

  By definition of the topology of pointwise convergence, the topology and the differentiable structure of the character group $\Char{\Hopf}{\lcB}$ is completely determined by the target algebra $\lcB$.
  Of course the algebraic structure of $\Hopf$ determines the set of characters, e.g.\ it controls whether the character group is an abelian group.
  However, we do not need a topology on the Hopf algebra to turn its character groups into Lie groups.
  In a nutshell, the main idea behind the construction can be described as:
  The Lie group structure is controlled by the combinatorial data of the Hopf algebra and the topological data of the target algebra.

  \begin{remark}
  Note that the topology of pointwise convergence is a very natural choice for a topology on the character groups of Hopf algebras.
  In this respect certain character groups with this topology have already been studied as topological groups in the literature.
  	\begin{enumerate}
  	\item   For example in the Connes--Kreimer theory of renormalisation (cf.\ \cite[Proposition 1.47]{MR2371808}) the structural properties of certain character groups as topological groups with this topology are exploited.
  	We refer to Section \ref{sect: pro-Lie} for further information on this structure.
   \item As a further example we mention \cite[3.2]{MR3350091} where the projective limit topology on the character groups coincides with the topology of pointwise convergence with respect to the \emph{discrete} topology on the ground field $\K$. Hence it does not coincide with our topology which induces on every one-dimensional subspace the (natural) metric topology of $\R$ (or $\C$).
  Since \cite{MR3350091} works with an arbitrary field $\K$ of characteristic $0$ the discrete topology on $\K$ is the right choice in that setting.
  \end{enumerate}
  \end{remark}

  The topology of pointwise convergence is rather coarse, i.e.\ compared to other natural function space topologies it has few open sets.
  In particular, open sets only control the behaviour of characters in a finite number of points at once.
  Especially in applications in numerical analysis one would like to have finer topologies which enable a better control.
  Let us illustrate this in the example of the Butcher group:

  \begin{example}
   The Butcher group $\BGp$ coincides with the group $\Char{\Hopf_{CK}^\K}{\K}$ of $\K$-valued characters of the Connes--Kreimer Hopf algebra of rooted trees (see Example \ref{ex: charactergroups} (c)).
   This Hopf algebra is graded and connected, whence Theorem \ref{thm: main} allows $\BGp$ to be turned into a Lie group with the pointwise topology.
   Recall that as an algebra $\Hopf_{CK}^\K$ is the polynomial algebra $\K [\RT]$ (where $\RT$ is the set of rooted trees).
   The elements in the Butcher group correspond to numerical integration schemes as they are linked to a certain type of (formal) series called $B$-series.
   In applications one now wants to restrict the growth of the series coefficients to achieve convergence of the series at least on a small disk.
   To this end, one commonly imposes an exponential growth bound to the elements in the Butcher group, i.e.\ the growth of a character is restricted by an exponential bound in every tree.
   The topology of pointwise convergence does not contain open sets which allow one to control infinitely many coefficients at once, whence it is too coarse for some applications.
   However, no suitable replacement for the topology on $\BGp$ to circumvent these problems is presently known (see the discussion of topologies on the Butcher group in \cite[Remark 2.5]{BS14}).
  \end{example}

   Though there seems to be no candidate for a finer topology on character groups which turns these into topological groups, the situation is better if one considers only certain subgroups.
   Again we specialise to the case of the Butcher group.

  \begin{example}[The tame Butcher group]
  Let $\cB$ be the subgroup of $\Char{\Hopf_{CK}^\K}{\K}$ of all elements $\varphi$ which satisfy
    \begin{displaymath}
   \text{there exist } C,\, K >0 \mbox{ such that } \norm{\varphi (\tau)} \leq C K^{|\tau|} \mbox{ for all } \tau \in \RT_0.
    \end{displaymath}
   Adapting a result of Hairer and Lubich \cite[Lemma 9]{HL1997}, one can show that the B-series associated to elements in $\cB$ with respect to an analytic map $f$ converge at least locally.
   We call $\cB$ the \emph{tame Butcher group}.

   One can show that the tame Butcher group is a Lie group modelled on an inductive limit of Banach spaces.
   Note that the resulting topology is strictly finer than the topology of pointwise convergence.

   Albeit the differential structure is more complicated than the one of the Butcher group, the tame Butcher group is closely related to the Butcher group.
   The key property of the tame Butcher group is that it provides a better control for the purposes of numerical analysis.
   Furthermore, consider on a Banach space $E$ the differential equation
   \begin{displaymath}
    \begin{cases}
     \frac{\text{d}}{\text{d} t} x(t) &= f(x(t))\\
     x(0) &= y_0
    \end{cases} \text{ with } f \colon E \rightarrow E \text{ analytic}.
   \end{displaymath}
   Then the map sending an element of $\cB$ to the B-series with respect to $f$ induces a Lie group (anti)morphism
    \begin{displaymath}
     B_f \colon \cB \rightarrow \text{DiffGerm}_{(y_0,0)} (E\times \C) := \left\{\text{germ}_{(y_0,0)} \phi \middle| \substack{\phi \colon E \times \K \supseteq U \rightarrow V \text{ is a diffeomorphism}\\ \text{ with }\phi (y_0,0) = (y_0,0)}\right\},
    \end{displaymath}
   where the group operation of $\text{DiffGerm}_{(y_0,0)} (E\times \C)$ is composition (cf.\ \cite[Section 3]{MR2670675}).
   Then the map $B_f$ can be seen as the Lie theoretic realisation of the mechanism which passes from a numerical integrator to the associated numerical solution of the differential equation.
   We refer to \cite{BS15} for further details.
   \end{example}

  Conceivably one can adapt the idea of the tame Butcher group to the general setting of character groups of Hopf algebras.

    \begin{problem}
    It would be interesting to see whether the construction of the tame Butcher group can be generalised to obtain a Lie theory for ``groups of exponentially bounded characters'' of graded connected Hopf algebras.
    The groups we envisage here should arise as subgroups of character groups (albeit with a finer topology/different differential structure).

    We expect these groups to be useful in several applications.
    In particular, one would hope that such a theory is applicable in the following situations:
      \begin{enumerate}
       \item In the context of Lie--Butcher theory of numerical analysis, i.e.\ for numerical integrators on manifolds and Lie groups (see e.g.\ \cite{MR3085679} and the references therein).
       \item Often in the theory of numerical analysis bounds appear naturally.
    For an example of such a situation see e.g.\ \cite{MR3320934} where bounds on a function and its Fourier coefficients are used to derive error estimates.
       \item In control theory so called output feedback equations are studied. Recently connections of problems related to these equations with character groups of Hopf algebras have been discovered (see \cite{MR3278760} and \cite{1507.06939v1}).
       In particular, one is interested in the convergence of certain formal series, which is assured by similar growth conditions as imposed in the tame Butcher group (cf.\ \cite[Section 2.2]{MR3278760}).
      \end{enumerate}
    \end{problem}

  Another candidate for a topology on the Butcher group $\BGp$ appears implicitly in Butcher's 1972 paper.
   \begin{theorem}[{\cite[Theorem 6.9]{Butcher72}}]
   If $a \in \BGp$ and $\RT_f$ is any finite subset of $\RT$ then there is a $b\in G_0$ such
   that $a\vert_{\RT_f}=b\vert_{\RT_f}$.
   \end{theorem}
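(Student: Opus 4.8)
The assertion is precisely that the subgroup $G_0 \subseteq \BGp$ of elementary weight maps of finite-stage Runge--Kutta methods is dense in $\BGp = \Char{\Hopf_{CK}^\K}{\K}$ for the topology of pointwise convergence. Since composition of Runge--Kutta methods is again a Runge--Kutta method, $G_0$ is closed under the convolution product, and since every finite $\RT_f$ is contained in $\RT_{\le N} := \{\tau \in \RT : |\tau| \le N\}$ with $N := \max_{\tau \in \RT_f}|\tau|$, it is enough to prove: for every $a \in \BGp$ and every $N \in \N$ there is $b \in G_0$ with $a(\tau) = b(\tau)$ for all $\tau \in \RT_{\le N}$. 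I would prove this by induction on $N$.

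The key structural fact is that the number-of-nodes grading on $\Hopf_{CK}^\K$ makes $\BGp$ a \emph{filtered} group: since $\Delta\tau = \sum_{s \in \OST(\tau)}(\tau\setminus s)\otimes s_\tau$ with $|\tau\setminus s| + |s_\tau| = |\tau|$, the value $(\phi\star\psi)(\tau)$ for $|\tau| = k$ depends only on the restrictions of the characters $\phi,\psi$ to $\RT_{\le k}$, and the antipode $S$ preserves the grading, so group inversion respects the same filtration. Assuming inductively that $b' \in G_0$ already matches $a$ on $\RT_{\le N-1}$, set $c := a \star (b')^{-1} \in \BGp$; by the filtration property $c(\tau) = 0$ for $1 \le |\tau| \le N-1$. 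If one can produce $d \in G_0$ with $d(\tau) = 0$ for $1 \le |\tau| \le N-1$ and $d(\tau) = c(\tau)$ for $|\tau| = N$, then $b := d \star b' \in G_0$ matches $a$ on all of $\RT_{\le N}$ (on lower orders because $d$ equals the unit there, in order $N$ because $d\star b'$ and $c\star b' = a$ agree modulo orders $> N$). Thus everything reduces to the homogeneous correction $d$.

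To build $d$, pass to the Lie algebra of $\BGp$ from Theorem~\ref{thm: main}: its degree-$N$ component has basis $\{\partial_\tau : |\tau| = N\}$, where $\partial_\tau \in \InfChar{\Hopf_{CK}^\K}{\K}$ is the infinitesimal character determined on trees by $\partial_\tau(\sigma) = \delta_{\sigma,\tau}$ (it vanishes on all products of two or more trees, and the decomposables exhaust the complement of the tree-span in degree $N$; the whole Lie algebra is the completed free pre-Lie algebra on one generator). Put $\eta := \sum_{|\tau| = N} c(\tau)\,\partial_\tau$ and $d := \exp(\eta)$ with $\exp$ as in Lemma~\ref{lem: exp:ana}. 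A short Sweedler-notation computation gives $\exp(t\,\partial_{\tau_0})(\sigma) = t\,\delta_{\sigma,\tau_0}$ for all $\sigma$ with $|\sigma| \le N = |\tau_0|$ (higher $\star$-powers of $\partial_{\tau_0}$ contribute only in degrees $\ge 2N$, and $\partial_{\tau_0}$ kills products); since the summands of $\eta$ are supported in degree $N$ they commute modulo orders $> N$, so $d(\tau) = 0$ for $1 \le |\tau| \le N-1$ and $d(\tau) = c(\tau)$ for $|\tau| = N$, as required. It remains to see that $d$ coincides on $\RT_{\le N}$ with an element of $G_0$: enumerating the trees of order $N$ as $\tau_1,\dots,\tau_r$, we have $d = \exp(c(\tau_1)\partial_{\tau_1}) \star \cdots \star \exp(c(\tau_r)\partial_{\tau_r})$ modulo orders $> N$, so by the filtration property and closure of $G_0$ under $\star$ it suffices that each factor $\exp(t\,\partial_{\tau_0})$ agree on $\RT_{\le N}$ with a finite Runge--Kutta method. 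That, finally, is the statement that for each tree $\tau_0$ of order $N$ and each $t \in \R$ there is a tableau $(A,b)$ whose elementary weight of $\tau_0$ equals $t$ while its elementary weights of all the remaining trees of order $\le N$ vanish; it is supplied by Butcher's recursion on the shape of $\tau_0$, assembling the tableau from tableaux for the root-subtrees together with extra stages that cancel the unwanted lower-order weights (cf.\ \cite{Butcher72}).

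I expect the main obstacle to be precisely this last, quantitative step. The algebraic part --- the filtration formalism, the exponential of Lemma~\ref{lem: exp:ana}, Baker--Campbell--Hausdorff in the nilpotent quotient of $\BGp$ by orders $> N$, and the identification of the degree-$N$ Lie algebra --- is routine, but constructing Runge--Kutta tableaux with prescribed leading-order elementary weights and, above all, checking that \emph{all} unwanted weights of order $\le N$ vanish requires careful bookkeeping of the forests $\tau\setminus s$ that occur in the recursive order-condition formulae. (One can bypass the induction by arguing directly that for $s$ large the order-condition map $\R^{s\times s}\times \R^s \to \R^{\RT_{\le N}}$ has surjective differential at a suitable point and that its image is a subgroup of the simply connected nilpotent Lie group $(\R^{\RT_{\le N}},\star)$, hence is onto; but this only relocates the difficulty to the same full-rank computation.)
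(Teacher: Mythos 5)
The paper itself offers no proof of this statement: it is quoted verbatim from Butcher's 1972 paper and used as a citation (with a footnote adjusting the meaning of $G_0$), so there is nothing internal to compare your argument against; I can only assess it on its own terms.

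Your Lie-theoretic reduction is correct and is essentially the standard strategy. The filtration property of $\star$ (that $(\phi\star\psi)(\tau)$ for $|\tau|=k$ depends only on the restrictions to trees of order $\le k$, because $\Delta$ and $S$ respect the number-of-nodes grading), the inductive correction $c=a\star(b')^{-1}$, and the computation showing $\exp\bigl(\sum_{|\tau|=N}c(\tau)\partial_\tau\bigr)$ agrees with the counit below order $N$ and with $c$ in order $N$ (since all higher $\star$-powers and all commutators of the $\partial_\tau$ land in degree $\ge 2N$) are all sound. However, the argument has a genuine gap exactly where you flag it, and that gap is not a technicality --- it is the entire content of Butcher's theorem. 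Everything before the last step takes place inside the abstract character group $\Char{\Hopf_{CK}^\K}{\K}$ and never uses any property of Runge--Kutta methods beyond closure of $G_0$ under $\star$; the claim that for each tree $\tau_0$ of order $N$ and each $t\in\R$ there is a tableau $(A,b)$ whose elementary weight at $\tau_0$ is $t$ while all other elementary weights of order $\le N$ vanish is asserted and referred back to ``Butcher's recursion (cf.\ \cite{Butcher72})'' --- i.e.\ to the very source whose theorem is being proved. As written the proof is therefore circular, or at best incomplete: one must actually construct such tableaux (or, in your alternative route, verify that the order-condition map has full rank at a suitable tableau and that its image is a subgroup of the truncated group $(\R^{\RT_{\le N}},\star)$, which requires knowing that the adjoint/inverse of a Runge--Kutta method is again realised by a Runge--Kutta method). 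Note also that your construction needs $G_0$ to consist of methods with \emph{arbitrary real} weights (e.g.\ $\sum_i b_i=0$ is forced for the correction factors), so the precise definition of $G_0$ matters; the paper's footnote is making exactly this kind of adjustment. Until the tableau-existence lemma is supplied with an actual construction, the proof does not stand on its own.
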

   The above theorem is sometimes paraphrased as saying that the set of Runge--Kutta methods is dense in $\BGp$ \footnote{$G_0$ in \cite[Theorem 6.9]{Butcher72} is larger than the set of Runge--Kutta methods, however, the statement still holds if $G_0$ is the group of Runge--Kutta methods. See also \cite[Theorem 317A]{Butcher08}.}.

   Indeed, \cite[Theorem 6.9]{Butcher72} implies that the set of Runge--Kutta methods is dense in the Butcher group equipped with the topology of pointwise convergence.
   However, it also implies that the set of Runge--Kutta methods stays dense if one uses a much finer topology on $\BGp \subset \Hom_\K(\Hopf_{CK}^\K, \K)$, the ultrametric topology.

   \begin{definition}[Ultrametric topology]
 When $\Hopf$ is graded, the ultrametric topology on  $\Hom_\K (\Hopf, \lcB)$ is generated by the \emph{ultrametric}\footnote{A metric $d$ is an ultrametric if $d(\phi, \psi) \le \max\{ d(\phi, \chi), d(\chi, \psi)\}$.}
   \[d(\phi, \psi)=2^{-\textrm{ord}(\phi-\psi)},\]
   where $\textrm{ord}(\phi)$ is the largest $N\in \N_0\cup \{\infty\}$ such that
   \[\phi(x)=0 \quad \text{for all } x\in \bigoplus_{n=0}^{N} \Hopf_n.\]
   \end{definition}
 For $\BGp \subset \Hom_{\K}(\Hopf_{CK}^\K,\K)$, define the ultrametric topology as the subspace topology.
 In the ultrametric topology on $\BGp$, a sequence $(a_n)_{n \in \N}$ will converge to $a$ only if, for every tree $\tau \in \RT$, there is an $N$ such that $N \leq n$ implies $a_n(\tau)=a(\tau)$.
  Note that the ultrametric topology and the topology of pointwise convergence behave quite differently:
 Let $\Hopf$ be a $\K$-Hopf algebra with $\K \in \{\R,\C\}$.
 If we embed $\K$ as a linear subspace into $\Hom_\K (\Hopf, \lcB)$, the ultrametric topology induces the discrete topology on $\K$ whereas the topology of pointwise convergence induces the usual (metric) topology on $\K$.
 In particular, this shows that the ultrametric topology does not turn $\Hom_\K (\Hopf, \lcB)$ (and $\InfChar{\Hopf}{\lcB}$) into locally convex spaces over $\K$.

 The ultrametric topology can be useful in numerics; for instance, the order of a B-series method given by $a \in \BGp$ can be read off directly from the ultrametric via $d(a, e)= 2^{-p}$, where $e$ is the ``exact'' method and $p$ is the order of $a$.
 We record a further difference between both topologies in the following example:

\begin{example}
Let $a_h$ denote the B-series $a_h(\emptyset)=1$, $a_h(\onenode)=h$, $a_h(\tau)=0$ for all trees with $|\tau|\ge 2$, and consider the sequence of B-series
\[\{b_n\}_{n\in \N},\quad b_n=a_{1/n}^{\star n}.\]
$b_n$ corresponds to the numerical method obtained by taking $n$ steps with the forward Euler method with stepsize $\frac 1n$.
It is possible to show that
\[\lim_{n \rightarrow \infty} b_n(\tau)=e(\tau), \quad \text{for all $\tau\in \RT$}.\]
Therefore, in the topology of pointwise convergence, $\lim_{n \rightarrow \infty}b_n = e$, reflecting that the Euler method is consistent.

It is also possible to show that $b_n(\twonode) = \frac{n-1}{2n}$, whereas $e(\twonode)=\frac{1}{2}$.
Therefore $d(b_n, e)=2^{-1}$, reflecting that the numerical methods corresponding to $b_n$ are all first order methods.
However $d(b_n, e)=2^{-1}$ also shows that, in the ultrametric topology, $\{b_n\}_{n\in \N}$ does not converge to $e$ (or at all).
\end{example}

   \section{The exponential map and regularity of character groups}

In this section we discuss Lie theoretic properties of the Lie group of characters of a graded and connected Hopf algebra.
Namely we consider the Lie group exponential map and a property called regularity in the sense of Milnor.
The common theme of both properties is that they are related to the solution of certain differential equations on the Lie group.
\medskip

We begin with a discussion of the properties of the Lie group exponential map.
Recall that the interplay between the Lie algebra and the Lie group for infinite-dimensional Lie groups is more delicate than in the finite-dimensional case.
For example there are infinite-dimensional Lie groups whose exponential map does not define a local diffeomorphism in a neighbourhood of the unit.
See the survey in \cite{MR2261066} for more information.
However, it turns out that the situation for character groups of Hopf algebras is much better as they belong to certain well behaved classes of Lie groups which we define now.

 \begin{definition}
 Let $G$ be a Lie group with smooth exponential map $\exp_G \colon \Lf (G) \rightarrow G$.
 The Lie group $G$ is called a
 	\begin{enumerate}
 	\item \emph{(locally) exponential} Lie group if $\exp_G$ induces a (local) diffeomorphism,
 	\item \emph{Baker--Campbell--Hausdorff}-Lie group (or BCH-Lie group for short), if $G$ is a $\K$-analytic (locally) exponential Lie group and $\exp_G$ induces a local $\K$-analytic diffeomorphism at $0$.
 	\end{enumerate}
   \end{definition}

 In a BCH-Lie group the Baker--Campbell--Hausdorff-series converges on an open zero-neighbourhood and defines an analytic multiplication on this neighbourhood.
 The BCH-formula is often used in applications of Lie algebras to numerical analysis (see e.g.\ \cite[III.4 and III.5]{MR2221614}).
 However, there are even applications of the BCH-formula associated to character groups of certain Hopf algebras.

 \begin{remark}
  In \cite{MR2271214} computations of the BCH-formula in an arbitrary Hall basis using labelled rooted trees are presented.
  It turns out that the BCH-formula discussed there is related to the BCH-formula on the Lie algebra of infinitesimal characters on certain Hopf algebras of labelled trees (cf.\ \cite[Section 7]{MR2271214} and in particular loc.\ cit.\ Example 10).
  Also see \cite{MR2510918} for further information and the references contained therein.
 \end{remark}

 From a Lie theoretic point of view BCH-Lie groups have very strong structural properties.
 For example the automatic smoothness theorem \cite[Proposition 2.4]{MR1934608} implies that every continuous group homomorphism between BCH-Lie groups is automatically $\K$-analytic. In particular this entails that the structure of a BCH-Lie group as a topological group uniquely determines the Lie group structure.

 \begin{proposition}[{\cite{BDS15}}]
  Let $\Hopf$ be a graded and connected Hopf algebra and $\lcB$ be a commutative locally convex algebra. \begin{enumerate}
   \item Then the Lie group $\Char{\Hopf}{\lcB}$ is a BCH-Lie group,
   \item the exponential map $\exp_{\Char{\Hopf}{\lcB}}$ is a $\K$-analytic diffeomorphism.
  \end{enumerate}
 \end{proposition}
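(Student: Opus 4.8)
The plan is to reduce both assertions to the single statement that the global parametrisation $\exp\colon \InfChar{\Hopf}{\lcB}\to\Char{\Hopf}{\lcB}$ of Lemma \ref{lem: exp:ana} is a $\K$-analytic diffeomorphism. Granting this, part (1) is immediate: by Theorem \ref{thm: main} the group $\Char{\Hopf}{\lcB}$ is a $\K$-analytic Lie group whose Lie group exponential map is exactly this $\exp$; and a $\K$-analytic diffeomorphism is in particular a global diffeomorphism (so the group is exponential, hence locally exponential) and a local $\K$-analytic diffeomorphism at $0$, which is precisely the definition of a BCH-Lie group. So (1) follows from (2), and it remains to prove (2).

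For part (2), recall from Lemma \ref{lem: exp:ana} that $\exp$ is already known to be $\K$-analytic and bijective; what is missing is $\K$-analyticity of the inverse. I would exhibit the inverse explicitly as the logarithm series. On the ambient densely graded algebra, set
\[
  \log\colon 1_\lcA + \cI \longrightarrow \cI,\qquad 1_\lcA+\psi \longmapsto \sum_{k\ge 1}\frac{(-1)^{k+1}}{k}\,\psi^{\star k},
\]
where $\cI=\Hom_\K(\bigoplus_{n\in\N}\Hopf_n,\lcB)$. As for $\exp$, in each homogeneous component $\Hom_\K(\Hopf_n,\lcB)$ only finitely many of the summands $\psi^{\star k}$ contribute, since $\psi$ has no $\Hom_\K(\Hopf_0,\lcB)$-part and the densely graded convolution raises degree; hence $\log$ is well defined as a map into $\prod_{n\in\N_0}\Hom_\K(\Hopf_n,\lcB)\cong\Hom_\K(\Hopf,\lcB)$, and the functional calculus for densely graded algebras (\cite[Lemma B.6]{BDS15}) — the same tool used for $\exp$ — shows that $\log$ is $\K$-analytic. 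The identities $\log\circ\exp=\mathrm{id}_{\cI}$ and $\exp\circ\log=\mathrm{id}_{1_\lcA+\cI}$ hold because they are formal power series identities and, thanks again to the densely graded structure, each one may be checked after projection to any fixed degree, where it reduces to a finite computation.

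Finally I would observe that $\log$ carries $\Char{\Hopf}{\lcB}$ into $\InfChar{\Hopf}{\lcB}$. This needs no new argument: Lemma \ref{lem: exp:ana} already asserts that $\exp$ restricts to a bijection $\InfChar{\Hopf}{\lcB}\to\Char{\Hopf}{\lcB}$, so its globally defined inverse $\log$ necessarily maps $\Char{\Hopf}{\lcB}$ back onto $\InfChar{\Hopf}{\lcB}$ (here commutativity of $\lcB$, which underpins both that $\InfChar{\Hopf}{\lcB}$ is a Lie subalgebra and that $\exp$ restricts to a group bijection, is used). Since $\InfChar{\Hopf}{\lcB}$ and $\Char{\Hopf}{\lcB}$ carry the subspace structures from $\Hom_\K(\Hopf,\lcB)$, the restricted $\log\colon\Char{\Hopf}{\lcB}\to\InfChar{\Hopf}{\lcB}$ is still $\K$-analytic and is a two-sided inverse of $\exp$; hence $\exp$ is a $\K$-analytic diffeomorphism, which proves (2) — and with it (1).

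The main obstacle is the $\K$-analyticity claim for $\log$ (equivalently, the one already invoked for $\exp$): verifying that a map given by a power series over a densely graded locally convex algebra is $C^\omega_\K$ in the sense of Bastiani calculus is genuinely a piece of infinite-dimensional analysis — one must see that the series truncates degreewise and that the iterated directional derivatives assemble into continuous maps $U\times E^k\to F$ — and this is exactly what the functional calculus of \cite[Lemma B.6]{BDS15} supplies. Everything else (the formal inverse identities, the transfer to the subgroups of characters and infinitesimal characters, and the reduction of (1) to (2)) is bookkeeping on top of results already established.
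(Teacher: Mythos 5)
Your argument is correct and follows essentially the same route as the paper, which itself defers the proof to \cite{BDS15} but sets up exactly this machinery: $\exp$ as a global $\K$-analytic parametrisation (Lemma \ref{lem: exp:ana}, Theorem \ref{thm: main}) with inverse given by the logarithm series, both handled by the functional calculus for densely graded algebras. Your reduction of (1) to (2) and the degreewise truncation argument for $\log$ match the intended proof; the only point worth stating explicitly is that analyticity of the restricted $\log$ on $\Char{\Hopf}{\lcB}$ is immediate because the manifold structure there is by definition the one induced by the chart $\exp$.
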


\begin{remark}[Applications of the character group exponential map]\mbox{}
\begin{enumerate}
 \item The Lie group exponential of the Butcher group (cf.\ Example \ref{ex: charactergroups} (c)) has been used in computations in numerical analysis.
Namely, in backward error analysis one exploits that this map is a diffeomorphism  (see e.g.\ \cite[Proposition 8]{MR2271214}).

The inverse of the exponential map is closely related to so called Lie derivatives of B-series (see \cite[IX.1]{MR2221614}).
In particular, the formula in \cite[Lemma 9.1]{MR2221614} for the Lie derivative can be identified with the recursion formula for the inverse of the exponential map (cf.\ \cite[Section 5]{BS14}).
However, albeit the term ``Lie derivative'' is used in the literature, only algebraic properties of these maps are exploited.
 \item In \cite{MR3350091} related pairs of exponential maps of character groups have been studied in the context of the universal enveloping algebra of a post-Lie algebra.
 These mappings play a role in the numerical integration on post-Lie algebras.
 However, we should mention at this point that the topology on the character groups used in loc.cit. differs from the one we used in the construction of the Lie group structure.
\end{enumerate}
\end{remark}

\begin{problem}
 We have already mentioned several times that the Hopf algebras considered in numerical analysis are connected to so called pre- and post-Lie algebras.
 Recently these structures have gathered a lot of interest, see e.g.\ the work by Munthe-Kaas and collaborators \cite{MR3350091,MR3085679}.
 It would be interesting to see whether these additional structures induce more structure which is visible in the Lie group structure of the character groups.
 \end{problem}

We now turn to regularity properties of character groups.
Roughly speaking, regularity (in the sense of Milnor) means that a certain class of (ordinary) differential equations can be solved on the Lie group.
Note that it is highly non-trivial to solve ordinary differential equations on locally convex spaces beyond the realm of Banach spaces.
For example there are linear differential equations without solution or which admit infinitely many different solutions (see \cite{milnor1982} or \cite[Section 5.5]{MR656198}).

\begin{definition}[Regularity (in the sense of Milnor)]\label{defn: regular}
Let $G$ be a Lie group modelled on a locally convex space, with identity element $\one$, and
 $r\in \N_0\cup\{\infty\}$. We use the tangent map of the left translation
 $\lambda_g\colon G\to G$, $x\mapsto gx$ by $g\in G$ to define
 $g.v:= T_{\one} \lambda_g(v) \in T_g G$ for $v\in T_{\one} (G) =: \Lf(G)$.
 Following \cite{1208.0715v3}, $G$ is called
 \emph{$C^r$-semiregular} if for each $C^r$-curve
 $\gamma\colon [0,1]\rightarrow \Lf(G)$ the initial value problem
 \begin{displaymath}
  \begin{cases}
   \eta'(t)&= \eta (t). \gamma(t)\\ \eta(0) &= \one
  \end{cases}
 \end{displaymath}
 has a (necessarily unique) $C^{r+1}$-solution $\Evol (\gamma):=\eta\colon [0,1]\rightarrow G$.
 If furthermore the map
 \begin{displaymath}
  \evol \colon C^r([0,1],\Lf(G))\rightarrow G,\quad \gamma\mapsto \Evol
  (\gamma)(1)
 \end{displaymath}
 is smooth, $G$ is called \emph{$C^r$-regular}.\footnote{Here we consider $C^r([0,1],\Lf(G))$ as a locally convex vector space with the pointwise operations and the topology of uniform convergence of the function and its derivatives on compact sets.} If $G$ is $C^r$-regular and $r\leq s$, then $G$ is also
 $C^s$-regular. A $C^\infty$-regular Lie group $G$ is called \emph{regular}
 \emph{(in the sense of Milnor}) -- a property first defined in \cite{MR830252}.
 Every finite-dimensional Lie group is $C^0$-regular (cf.\ \cite{MR2261066}).
\end{definition}

Several important results in Lie theory are only available for regular Lie groups  (see
 \cite{MR830252,MR2261066,1208.0715v3}, cf.\ also \cite{MR1471480} and the references therein).
Up to this point all known Lie groups modelled on sufficiently complete spaces (i.e.\ on Mackey complete spaces, see \cite[Chapter I.2]{MR1471480}) are regular.

\begin{example}
 Consider again the Butcher group $\BGp$, discussed in Example \ref{ex: charactergroups} (c).
 The differential equation for regularity of this Lie group takes the form of a countable system of differential equations:
 For a continuous curve $\mathbf{a} \colon [0,1] \rightarrow \Lf (\BGp)$ we seek a differentiable curve $\gamma \colon [0,1] \rightarrow \BGp$ such that
  \begin{align*} \begin{cases}
                  \gamma' (t) (\tau) &= \mathbf{a}(t)(\tau) + \displaystyle\sum_{\substack{\theta \in \RT_0 \\ |\theta| < |\tau|}} A_{\theta,\tau} (t,\mathbf{a})\gamma (t)(\theta), \\
		   \gamma (0)(\tau) &= 0
                 \end{cases} \quad  \forall \tau \in \RT
\end{align*}
 where $ A_{\theta,\tau} (t,\mathbf{a})$ is a polynomial in $\mathbf{a}(t)(\sigma), \sigma \in \RT_0$ with $|\sigma| < |\tau|$.
 These differential equations form an infinite lower diagonal system of differential equations. As shown in \cite[Section 4]{BS14} this system can be solved inductively, i.e.\ via a projective limit argument.\footnote{This is possible since the differentiable structure of the Butcher group turns it into a projective limit of finite-dimensional Lie groups.
 We will return to this phenomenon in Section \ref{sect: pro-Lie}.}
 Hence $\BGp$ is a $C^0$-regular Lie group.
\end{example}

 \begin{proposition}[{\cite{BDS15}}]\label{prop: char:reg}
  Let $\Hopf$ be a graded and connected Hopf algebra and $\lcB$ be a commutative and complete locally convex algebra.
  Then $\Char{\Hopf}{\lcB}$ is a $C^0$-regular Lie group.
  \end{proposition}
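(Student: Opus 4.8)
Write $A:=\Hom_\K(\Hopf,\lcB)$, with the convolution $\star$ and the topology of pointwise convergence, and recall from the proof of Lemma~\ref{lem: exp:ana} the identification $A\cong\prod_{n\in\N_0}\Hom_\K(\Hopf_n,\lcB)$; let $A_n:=\Hom_\K(\Hopf_n,\lcB)$ and write $\phi_n$ for the $n$-th component of $\phi\in A$. Since $\lcB$ is complete, so is each $A_n$ (a product of copies of $\lcB$), hence so is $A$. The unit of $A$ is $1_A=u_\lcB\circ\epsilon_\Hopf$; one has $\Char{\Hopf}{\lcB}\subseteq 1_A+\cI$ and $\InfChar{\Hopf}{\lcB}\subseteq\cI=\Hom_\K(\bigoplus_{n\geq1}\Hopf_n,\lcB)$, because infinitesimal characters vanish on $\Hopf_0\cong\K$; and for $g\in\Char{\Hopf}{\lcB}$ the tangent map of left translation is $v\mapsto g.v=g\star v$, the restriction of the continuous linear map $x\mapsto g\star x$ on $A$. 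Thus, for a continuous curve $\gamma\colon[0,1]\to\InfChar{\Hopf}{\lcB}$, the evolution equation of Definition~\ref{defn: regular} reads, inside $A$, $\eta'(t)=\eta(t)\star\gamma(t)$ with $\eta(0)=1_A$. The plan is to solve this componentwise, using that $\star$ is densely graded; then to verify that the solution stays in $\Char{\Hopf}{\lcB}$; and finally to read smoothness of $\evol$ off the product structure of $A$.

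\emph{Semiregularity.} As $\gamma(t)\in\cI$ has vanishing $0$-component, the equation decouples into $\eta_0(t)\equiv(1_A)_0$ and, for $n\geq1$,
\[
 \eta_n'(t)=\sum_{k=0}^{n-1}\eta_k(t)\star\gamma_{n-k}(t),\qquad \eta_n(0)=0,
\]
a lower-triangular system: the $n$-th right-hand side involves only $\eta_0,\dots,\eta_{n-1}$. I would define $\eta_n$ recursively by $\eta_n(t)=\int_0^t\sum_{k=0}^{n-1}\eta_k(s)\star\gamma_{n-k}(s)\,\D s$, the integrand being a continuous curve into the complete space $A_n$ (the restriction of $\star$ to the graded pieces is continuous bilinear, cf.\ Lemmas~\ref{lem: top:gpLA} and~\ref{lem: exp:ana}). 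This produces a $C^1$ curve $\eta=(\eta_n)_n\colon[0,1]\to A$ with $\eta(0)=1_A$ solving the equation, and the same induction together with the fundamental theorem of calculus yields uniqueness; if $\gamma$ is $C^r$ then $\eta$ is $C^{r+1}$, although here only $r=0$ is needed.

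\emph{The solution is a character.} This is the heart of the matter. Consider the multiplicativity defect
\[
 D(t)\colon\Hopf\otimes\Hopf\to\lcB,\qquad a\otimes b\mapsto\eta(t)(ab)-\eta(t)(a)\,\eta(t)(b).
\]
Since $\eta(t)\in 1_A+\cI$ agrees with $u_\lcB\circ\epsilon_\Hopf$ on $\Hopf_0$, we have $\eta(t)\in\Char{\Hopf}{\lcB}$ as soon as $D(t)=0$. Differentiating, substituting $\eta'(t)=\eta(t)\star\gamma(t)$, and using that $\Delta_\Hopf$ is an algebra morphism while $\gamma(t)$ is an infinitesimal character (so $\gamma(t)(xy)=\gamma(t)(x)\epsilon_\Hopf(y)+\epsilon_\Hopf(x)\gamma(t)(y)$), a short Sweedler-notation computation collapses to
\[
 \frac{d}{dt}D(t)(a\otimes b)=\sum_{(a)}D(t)(a_1\otimes b)\,\gamma(t)(a_2)+\sum_{(b)}D(t)(a\otimes b_1)\,\gamma(t)(b_2).
\]
Because $\gamma(t)$ annihilates $\Hopf_0$, the surviving terms have $a_2$, resp.\ $b_2$, of degree $\geq1$, so the right-hand side strictly lowers the total degree on $\Hopf\otimes\Hopf$; as $D(0)=0$ (the unit $1_A$ being a character), an induction on total degree — each step integrating the zero curve — forces $D\equiv0$. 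Hence $\eta(t)\in\Char{\Hopf}{\lcB}$ for all $t$, the curve $\Evol(\gamma):=\eta$ is the required solution, and $\Char{\Hopf}{\lcB}$ is $C^0$-semiregular. I expect this step to be the main obstacle: one must check that differentiating $D$ really does produce a right-hand side that is \emph{linear} in $D$ and strictly degree-lowering — which is exactly where both multiplicativity of $\Delta_\Hopf$ and the infinitesimal-character property of $\gamma$ enter — so that the degree induction closes.

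\emph{Regularity.} It remains to show that $\evol\colon C^0([0,1],\InfChar{\Hopf}{\lcB})\to\Char{\Hopf}{\lcB}$, $\gamma\mapsto\eta(1)$, is smooth. The manifold structure on $\Char{\Hopf}{\lcB}$ is the one (Theorem~\ref{thm: main}) making $\exp$ — equivalently its inverse, which on $1_A+\cI$ is given by a series that is convergent componentwise in the densely graded algebra $A$ and is $\K$-analytic there — a diffeomorphism; hence it suffices to show that $\gamma\mapsto\eta(1)\in A$ is smooth with values in $\Char{\Hopf}{\lcB}$, and by $A=\prod_nA_n$ this reduces to smoothness of each $\gamma\mapsto\eta_n(1)\in A_n$. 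But the recursion above builds $\eta_n(1)$ from $\gamma$ by finitely many manifestly smooth operations: extracting a graded component $C^0([0,1],\InfChar{\Hopf}{\lcB})\to C^0([0,1],A_l)$ (continuous linear), forming pointwise $\star$-products $C^0([0,1],A_k)\times C^0([0,1],A_l)\to C^0([0,1],A_{k+l})$ (continuous bilinear, hence smooth), and indefinite integration $C^0([0,1],A_m)\to C^1([0,1],A_m)$ (continuous linear). Since smoothness is preserved under composition, $\gamma\mapsto\eta_n(1)$, and therefore $\evol$, is smooth, so $\Char{\Hopf}{\lcB}$ is $C^0$-regular. \qed
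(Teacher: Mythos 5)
Your argument is correct, but it follows a genuinely different route from the one the paper takes (via the cited proof in \cite{BDS15} and the remark following the proposition): there, $\Char{\Hopf}{\lcB}$ is identified with a closed Lie subgroup of the unit group of the continuous inverse algebra $\Hom_\K(\Hopf,\lcB)$, and $C^0$-regularity is inherited from the known regularity of unit groups of (Mackey) complete continuous inverse algebras. You instead solve the evolution equation by hand: the densely graded structure makes the system lower triangular, so $\eta_n$ is obtained by iterated integration of continuous curves in the complete spaces $\Hom_\K(\Hopf_n,\lcB)$; the membership of the solution in the character group is established by showing that the multiplicativity defect $D(t)$ satisfies a linear, strictly degree-lowering ODE with $D(0)=0$ (your Sweedler computation and the degree induction are correct, and this is indeed where both the multiplicativity of $\Delta_\Hopf$ and the infinitesimal-character property of $\gamma$ are used); and smoothness of $\evol$ is read off componentwise from the product structure. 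What each approach buys: yours is self-contained and elementary, needs only integration of continuous curves (so sequential completeness of $\lcB$ would already suffice), and makes explicit the step -- hidden in the subgroup formulation -- that the evolution of a curve of infinitesimal characters stays inside the character group; the paper's route is shorter once the continuous-inverse-algebra machinery is available, extends to Mackey complete target algebras as noted after the proposition, and delivers the BCH and exponentiality statements from the same embedding. The one reduction a careful reader should check in your write-up is the passage from smoothness of $\gamma\mapsto\eta(1)$ into the ambient algebra to smoothness into the manifold $\Char{\Hopf}{\lcB}$; this is legitimate precisely because $\exp^{-1}$ is induced by a componentwise finite, analytic series on the affine subspace $1_A+\cI$ (Lemma \ref{lem: exp:ana} and Theorem \ref{thm: main}), as you indicate.
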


  Note that we had to require $\lcB$ to be a complete algebra in Proposition \ref{prop: char:reg}.
  One can weaken this requirement, as it turns out that $\Char{\Hopf}{\lcB}$ is still a regular Lie group for so called ``Mackey complete'' locally convex algebras.
  We refer to \cite{BDS15} for further information.

\begin{remark}
The Lie theory of character groups is closely connected to the Lie theory for unit groups of continuous inverse algebras (see \cite{MR2997582,MR1948922}).
In fact, each character group of a graded and connected Lie algebra can be identified with a closed Lie subgroup of the unit group of a suitable continuous inverse algebra. The details of this construction are recorded in \cite[Proof of Theorem 2.10]{BDS15}, where we have exploited this link to derive the regularity of the character group from the regularity of the ambient unit group.
\end{remark}

The differential equations of regularity also occur in natural questions connected to character groups of Hopf algebras.
Let us illustrate this with two examples.

Our first example is from applications in numerical analysis.
In \cite[p.8]{MR2946461} these differential equations appear naturally in the investigation of higher order averaging.

The second example appears in the theory of renormalisation of quantum field theories:

\begin{example}[Birkhoff decomposition and time ordered exponentials]\label{ex: toe}
 Consider the Hopf algebra of Feynman graphs $\Hopf_{FG}$ with its group $\Char{\Hopf_{FG}}{\C}$ of $\C$ valued characters.
 A crucial step in the Connes--Kreimer theory of renormalisation -- the so called BPHZ procedure -- can be formulated as a Birkhoff factorisation in the Lie group $\Char{\Hopf_{FG}}{\C}$.
 To this end, one wants to decompose a smooth loop, i.e.\ a smooth map $\gamma \colon C \rightarrow \Char{\Hopf_{FG}}{\C}$ defined on a circle $C\subseteq \C$ as 
 $\gamma (z) = \gamma_-^{-1}(z) \gamma_+(z)$. Here $\gamma_- ,\gamma_+$ are boundary values of certain holomorphic functions (see \cite[Definition 1.37]{MR2371808}).
 Then the negative part $\gamma_-$ of the Birkhoff decomposition yields the counterterms one seeks to compute in the renormalisation procedure (as explained in \cite[1.6.4]{MR2371808}, cf.\ explicitly \cite[Theorem 1.40]{MR2371808}).

 To prove some desirable properties of the Birkhoff decomposition one defines a \emph{time-ordered exponential}, i.e.\ for a smooth curve $\alpha \colon [a,b] \rightarrow \InfChar{\Hopf_{FG}}{\C}$ define
  \begin{displaymath}
   \mathrm{T} e^{\int_a^b \alpha (t)\D t} := \one_{\Char{\Hopf_{FG}}{\C}} + \sum_{n=1}^\infty \int_{a \leq s_1 \leq \cdots \leq s_n \leq b} \alpha (s_1)\cdots \alpha (s_n) \D s_1 \cdots \D s_n.
  \end{displaymath}
 Then it turns out that the time ordered exponentials solve the differential equation associated to regularity in $\Char{\Hopf_{FG}}{\C}$ for the curve $\alpha$ (see \cite[Proposition 1.51 (3)]{MR2371808}). 
 Time-ordered exponentials are important since they determine the Birkhoff decomposition in $\Char{\Hopf_{FG}}{\C}$. 
 Explicitly, for a loop $\gamma_\mu$ (on an infinitesimal punctured disk in $\C$) one has as negative part of the Birkhoff decomposition 
 \begin{displaymath}
  \gamma_- (z) =  \mathrm{T} e^{-\frac{1}{z}\int_0^\infty \theta_{-t} (\beta)\D t}
 \end{displaymath}
 where $\beta$ is the so called $\beta$-function of the theory (cf.\ \cite[Theorem 1.58]{MR2371808}) and $\theta$ a certain one-parameter family generated by the grading operator. 
 In particular, the time ordered exponentials determine the counterterms of perturbative renormalisation and one concludes that these depend only on the $\beta$-function of the theory.
 We are deliberately hiding the technical details here and refer instead to \cite{MR2371808}.
\end{example}

\section{Character groups as pro-Lie groups} \label{sect: pro-Lie}

A crucial requirement to turn character groups of Hopf algebras into Lie groups has been that the Hopf algebra is needed to be graded and connected.
As we have already mentioned, character groups of Hopf algebras which do not satisfy these conditions will in general not be infinite-dimensional Lie groups (with the topology of pointwise convergence).
Note however that we have already seen in Lemma \ref{lem: top:gpLA} that character groups of arbitrary Hopf algebras (with values in a locally convex algebra) are topological groups.
Similarly the Lie algebra of infinitesimal characters is a topological Lie algebra regardless of a grading on $\Hopf$.

In the present section we investigate the structure of the topological group $\Char{\Hopf}{\lcB}$ and its relation to the topological Lie algebra $\InfChar{\Hopf}{\lcB}$ for Hopf algebras which are not necessarily graded.
It turns out that for certain target algebras these topological groups admit a strong structure theory which is reminiscent of finite-dimensional Lie theory.
To phrase our results let us recall the notion of a pro-Lie group (see the extensive monograph \cite{MR2337107} or the recent survey \cite{axioms4030294}).

 \begin{definition}[pro-Lie group]													\label{defn: pro_lie_group}
   A topological group $G$ is called a \emph{pro-Lie group} if one of the following equivalent conditions holds:
   \begin{enumerate}
    \item  $G$ is isomorphic (as a topological group) to a closed subgroup of a product of finite-dimensional (real) Lie groups.
    \item  $G$ is the projective limit of a projective system of finite-dimensional (real) Lie groups (taken in the category of topological groups).
   \end{enumerate}
  \end{definition}

  The class of pro-Lie groups contains all compact groups (see e.g.~\cite[Corollary 2.29]{MR3114697}) and all connected locally compact groups (Yamabe's Theorem, see \cite{MR0054613}).
  However, this does not imply that all pro-Lie groups are locally compact and the pro-Lie groups in the present paper will almost never be locally compact.

  The structure theory of pro-Lie groups mirrors to a surprising degree the structure theory of finite-dimensional Lie groups (details are recorded in \cite{MR2337107}).
  Most importantly for us, every pro-Lie group is connected to a Lie algebra.
  We recall its construction now.
  Note that in absence of a differential structure a Lie algebra to the group can not be constructed as a tangent space.

  \begin{definition}[The pro-Lie algebra of a pro-Lie group]
   Let $G$ be a pro-Lie group and $\cL(G)$ the space of all continuous $G$-valued one-parameter subgroups, endowed with the compact-open topology.
   As a projective limit of finite-dimensional Lie algebras it is naturally a locally convex topological Lie algebra over $\R$ (see \cite[Definition 2.11]{MR2337107}).
  \end{definition}

  The character group of an arbitrary Hopf algebra (with values in a finite-dimensional algebra) turns out to be a pro-Lie group.
  In these cases the pro-Lie algebra can also be identified.

   \begin{theorem}[Character groups as pro-Lie groups {\cite[Theorem 5.6]{BDS15}}]							\label{thm: character_group_is_a_pro_lie_group}
   Let $\Hopf$ be a Hopf algebra and $\lcB$ be a commutative finite-dimensional $\K$-algebra (e.g.~$\lcB:=\K$).
   Then the group of $\lcB$-valued characters $\Char{\Hopf}{\lcB}$ endowed with the topology of pointwise convergence is pro-Lie group.

   Its pro-Lie algebra is isomorphic to the locally convex Lie algebra $\InfChar{\Hopf}{\lcB}$ of infinitesimal characters via the canonical isomorphism
   \[
    \InfChar{\Hopf}{\lcB} \rightarrow \cL(\Char{\Hopf}{\lcB})  , \quad \phi \mapsto (t\mapsto \exp(t\phi)).
   \]
  \end{theorem}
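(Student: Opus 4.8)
The plan is to realize $\Char{\Hopf}{\lcB}$ as a closed subgroup of a product of finite-dimensional Lie groups, which establishes the pro-Lie property via condition (i) of Definition \ref{defn: pro_lie_group}, and then to identify the pro-Lie algebra directly. The natural indexing set is the collection of finite-dimensional subcoalgebras of $\Hopf$: by the Fundamental Theorem of Coalgebras, every element of $\Hopf$ lies in a finite-dimensional subcoalgebra, so $\Hopf$ is the directed union $\Hopf = \bigcup_{C} C$ of its finite-dimensional subcoalgebras $C$, and we may further restrict to those $C$ that also generate a finite-dimensional \emph{subalgebra} $\Hopf_C$, or better, pass to finite-dimensional sub-bialgebras; these still form a directed system covering $\Hopf$. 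First I would check that for each such finite-dimensional sub-object $A \subseteq \Hopf$, the restriction map $\Char{\Hopf}{\lcB} \to \Char{A}{\lcB}$ lands in a space that is a finite-dimensional Lie group: since $\lcB$ is finite-dimensional and $A$ is finite-dimensional, $\Hom_\K(A,\lcB)$ is finite-dimensional, the convolution makes the relevant units into a (finite-dimensional, hence Lie) group, and the character condition \eqref{eq char:char} cuts out a closed algebraic subgroup, which is therefore a finite-dimensional Lie group. Then I would show that $\Char{\Hopf}{\lcB} \cong \varprojlim \Char{A}{\lcB}$ as topological groups: a compatible family of characters on all the $A$'s glues (uniquely) to a linear map on $\Hopf = \bigcup A$ that is automatically multiplicative and unital, so the canonical map is a bijection, and it is a homeomorphism because the topology of pointwise convergence on $\Char{\Hopf}{\lcB}$ is exactly the initial topology with respect to the restriction maps (evaluation at any fixed $h \in \Hopf$ factors through restriction to any $A$ containing $h$). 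Finally the limit is closed in the product since each defining relation (multiplicativity, compatibility with bonding maps) is a closed condition in the product topology.

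For the second assertion I would invoke the general fact \cite[Definition 2.11]{MR2337107} that the pro-Lie algebra functor $\cL$ commutes with projective limits: $\cL(\Char{\Hopf}{\lcB}) \cong \varprojlim \cL(\Char{A}{\lcB})$. For each finite-dimensional $A$ the group $\Char{A}{\lcB}$ is an ordinary (linear algebraic) Lie group, and its Lie algebra, computed as the tangent space at the identity $1_\lcB \circ \epsilon$, is precisely $\InfChar{A}{\lcB}$ — differentiating the multiplicativity relation $\phi(ab)=\phi(a)\phi(b)$ along a curve through the identity yields exactly the infinitesimal-character (Leibniz) relation \eqref{eq: InfChar:char}. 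Passing to the limit and using $\InfChar{\Hopf}{\lcB} \cong \varprojlim \InfChar{A}{\lcB}$ (again because a compatible family of infinitesimal characters glues to one on $\bigcup A$, and the pointwise-convergence topology is the corresponding initial topology) gives the isomorphism of topological Lie algebras. That the isomorphism is realized by $\phi \mapsto (t \mapsto \exp(t\phi))$ follows because on each finite-dimensional piece the Lie group exponential of $\Char{A}{\lcB}$ is the convolution exponential $\exp$ (this is the finite-dimensional, non-graded analogue of Lemma \ref{lem: exp:ana}, where the series is handled by nilpotence of $\phi - 1_\lcB$ or by ordinary matrix-exponential convergence in $\Hom_\K(A,\lcB)$), and one-parameter subgroups of a projective limit are precisely compatible families of one-parameter subgroups.

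The main obstacle I anticipate is organizing the directed system correctly: one must verify that the finite-dimensional sub-objects $A$ over which the limit is taken can be chosen so that each $\Char{A}{\lcB}$ is genuinely a \emph{finite-dimensional Lie group} (not merely a topological group), which requires $A$ to be closed under enough of the Hopf structure — at minimum a finite-dimensional subcoalgebra whose characters form a group, which is handled by restricting attention to finite-dimensional sub-bialgebras and checking these are cofinal in the directed union. A secondary technical point is confirming that the canonical maps $\InfChar{\Hopf}{\lcB} \to \varprojlim \InfChar{A}{\lcB}$ and $\Char{\Hopf}{\lcB} \to \varprojlim \Char{A}{\lcB}$ are homeomorphisms and not just continuous bijections; this is where the precise match between the topology of pointwise convergence and the projective-limit topology must be argued, using that every $h \in \Hopf$ lies in some $A$ so that $\ev_h$ factors through the bonding structure. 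Everything else — the closedness of the relations, the differentiation computation identifying the Lie algebra, the exponential formula — is routine once the projective system is set up.
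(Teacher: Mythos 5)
There is a genuine gap at the very point you flag as your ``main obstacle'': you propose to take the projective limit over finite-dimensional \emph{sub-bialgebras} of $\Hopf$ and assert these are cofinal in the directed union of finite-dimensional subcoalgebras. That assertion is false in general. The Fundamental Theorem of Coalgebras gives you finite-dimensional sub\emph{coalgebras}, but the subalgebra generated by such a subcoalgebra is typically infinite-dimensional, and many Hopf algebras have no nontrivial finite-dimensional sub-bialgebras at all. For instance, in $\Hopf=\K[x]$ with $x$ primitive, any sub-bialgebra containing $x$ contains every power of $x$, so the only finite-dimensional sub-bialgebra is $\K\cdot 1_\Hopf$; your directed system then fails to cover $\Hopf$ and the whole limit argument collapses. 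The restriction maps $\Char{\Hopf}{\lcB}\to\Char{A}{\lcB}$ you want simply are not available over a covering family.

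The paper's proof circumvents this by never asking the finite-dimensional pieces to carry a character group. It writes $\Hopf=\varinjlim C_i$ over finite-dimensional subcoalgebras only, observes that the convolution product needs nothing more than the coalgebra structure on the source, so each $\Hom_\K(C_i,\lcB)$ is a finite-dimensional associative algebra whose unit group $\Hom_\K(C_i,\lcB)^\times$ is a finite-dimensional Lie group, and obtains $\Hom_\K(\Hopf,\lcB)^\times\cong\varprojlim\Hom_\K(C_i,\lcB)^\times$. A character restricts to a unit of each $\Hom_\K(C_i,\lcB)$ (its convolution inverse is $\phi\circ S_\Hopf$ restricted to $C_i$, since $\Delta(C_i)\subseteq C_i\otimes C_i$), and the multiplicativity condition \eqref{eq char:char} is imposed as a closed condition on the limit group as a whole, exhibiting $\Char{\Hopf}{\lcB}$ as a closed subgroup of a pro-Lie group — which suffices by Definition \ref{defn: pro_lie_group}(i). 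Your remaining steps — matching the pointwise topology with the initial topology of the restrictions, identifying the Lie algebra by differentiating the defining relations, and verifying the one-parameter subgroups are $t\mapsto\exp(t\phi)$ — are in the right spirit and survive the repair, but they must be rerouted through the unit groups $\Hom_\K(C_i,\lcB)^\times$ rather than through character groups of sub-bialgebras.
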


  \begin{proof}[Sketch of ideas]
   Due to the fundamental theorem of coalgebras (see \cite[Theorem 4.12]{MR2035107}), one can write $\Hopf$ as a directed union of finite-dimensional coalgebras $\{C_i\}_{i \in I}$.
   On each of the spaces $\Hom_{\K} (C_i,\lcB)$ the convolution induces a locally convex algebra structure such that the unit groups satisfy
   \begin{displaymath}
    \Hom_\K (\Hopf , \lcB)^\times = \Hom_\K (\lim_{\rightarrow} C_i , \lcB)^\times \cong \lim_{\leftarrow} \Hom_{\K} (C_i,\lcB)^\times
   \end{displaymath}
  where the limit on the right hand side is taken in the category of topological groups.
  In particular, the groups $ \Hom_{\K} (C_i,\lcB)^\times$ are finite-dimensional Lie groups, whence $ \Hom_\K (\Hopf , \lcB)^\times$ is a pro-Lie group and $\Char{\Hopf}{\lcB}$ inherits this structure.
  The remaining assertions follow from direct computations involving the exponential map.\qed
  \end{proof}

  \begin{remark}
  \begin{enumerate}
   \item If we consider character groups of graded connected Hopf algebras with values in finite-dimensional algebras, we obtain two structures on the character group: The locally convex Lie group structure and the pro-Lie group structure. Fortunately the results in \cite{MR2475971} affirms that these two structures are compatible with each other. This means that the only additional information provided by the pro-Lie structure in this case is that the infinite-dimensional Lie group already constructed is a projective limit of finite-dimensional Lie groups.
    \item One can generalise the preceding result beyond the realm of finite-dimensional target algebras to so called ``weakly complete algebras''.
  See \cite[Section 5]{BDS15} for the definition and detailed statements.
  \end{enumerate}
  \end{remark}

  In applications of character groups of Hopf algebras the pro-Lie property has frequently been of crucial importance.
  Most importantly, it allows one to work with the projective limit structures of the Lie algebra of infinitesimal characters and the character group.

  \begin{example}
   \begin{enumerate}
    \item For the Hopf algebra of Feynman diagrams $\Hopf_{FG}$ one considers flat $\InfChar{\Hopf_{FG}}{\C}$-valued connections.
    To prove some desirable properties for maps depending on these connections one has to employ projective limit arguments, e.g.\ see \cite[Proposition 1.52]{MR2371808}.
    Furthermore, one can use the projective limit property to construct geometric data for elements in certain interesting algebras of infinitesimal characters.
    Here we mean the monodromy representation and their limit constructed in \cite[Lemma 1.54]{MR2371808} for elements in $\InfChar{\Hopf_{FG}}{\C}$.

    Note that in  \cite{MR2371808} the projective limit structure was deduced from the fact that every affine group scheme is a projective limit of linear algebraic groups.
    This requires the source Hopf algebra to be commutative, whereas the pro-Lie structure established in Theorem \ref{thm: character_group_is_a_pro_lie_group} does not depend on the commutativity of $\Hopf_{FG}$.
   \item In numerical analysis, properties of series are often studied by truncating to a finite number of terms, see e.g.\ the treatment of modified equations in \cite{HL1997} or \cite[Section 5.2, 5.4]{1502.05528v2}.
    This corresponds in the pro-Lie picture to passing from the projective limit to one of the (finite-dimensional) steps.
  \end{enumerate}
  \end{example}

The above list shows that it is quite useful to have the structure of a pro-Lie group to analyse character groups of Hopf algebras.
Conversely, one can ask which pro-Lie groups can be obtained as character groups.

\begin{problem}
 Characterize all pro-Lie groups that can be obtained as $\R$-valued character groups of (in general non-graded) Hopf algebras. \\
 At least all compact abelian groups appear as character groups of certain Hopf algebras (this follows from Pontryagin duality). On the other hand, one can show that an uncountable discrete group is never a character group of a Hopf algebra.
\end{problem}

\paragraph{Acknowledgement}
 The research on this paper was partially supported by the project \emph{Topology in Norway} (NRC project 213458) and \emph{Structure Preserving Integrators, Discrete Integrable Systems and Algebraic Combinatorics} (NRC project 231632).
 We thank J.M.~Sanz-Serna for pointing out references to results from numerical analysis which the authors were unaware of.
 Finally, we thank the anonymous referees for many useful comments which helped to improve the manuscript.
 \bibliographystyle{abbrv}
 \bibliography{BDS}
\end{document}